\documentclass[journal,twoside,web]{ieeecolor}

\usepackage{generic}
\usepackage{cite}
\usepackage{etoolbox}
\makeatletter
\let\ieeeoldmaketitle\maketitle
\def\maketitle{%
  \let\ieeeoldcenterline\centerline
  \def\centerline##1{\par\ieeeoldcenterline{##1}}%
  \ieeeoldmaketitle
}
\makeatother
\usepackage[T1]{fontenc}
\usepackage[utf8]{inputenc}
\usepackage{microtype}
\usepackage{amsmath,amssymb,amsfonts,mathtools}
\usepackage{booktabs,array}
\usepackage{graphicx}
\usepackage{float}
\usepackage{xcolor}
\usepackage{tikz}
\usetikzlibrary{arrows.meta,positioning,shapes.geometric}
\usepackage{url}
\usepackage[hidelinks]{hyperref}
\usepackage[noabbrev,nameinlink]{cleveref}
\usepackage{dblfloatfix}
\usepackage{flafter}

\allowdisplaybreaks

\newtheorem{definition}{Definition}
\newtheorem{lemma}{Lemma}
\newtheorem{theorem}{Theorem}
\newtheorem{corollary}{Corollary}
\newtheorem{proposition}{Proposition}

\newcounter{algorithm}

\newcommand{\R}{\mathbb{R}}
\newcommand{\T}{\mathcal{T}}
\newcommand{\V}{\mathcal{V}}
\newcommand{\E}{\mathcal{E}}
\newcommand{\C}{\mathcal{C}}
\newcommand{\out}{\operatorname{out}}
\newcommand{\pa}{\operatorname{pa}}

\newcommand{\trans}{\mathsf{T}}

\newcommand{\operation}[1]{\par\smallskip\noindent\emph{#1}\enspace}

\def\BibTeX{{\rm B\kern-.05em{\sc i\kern-.025em b}\kern-.08em
    T\kern-.1667em\lower.7ex\hbox{E}\kern-.125emX}}
\begin{document}

\bstctlcite{IEEEtranBSTcontrol}

\title{Rake--Compress Riccati Recursions for\\
Parallel Scenario-Tree Model Predictive Control}
\author{Jo\~ao Sousa-Pinto%
\thanks{Jo\~ao Sousa-Pinto is an independent researcher
(e-mail: joaospinto@gmail.com).}}

\maketitle

\begin{abstract}
Scenario-tree model predictive control (MPC) represents future information by a
rooted tree and optimizes a nonanticipative policy over that tree.  Numerical
methods for solving the resulting nonlinear program typically compute their
search directions through a sequence of branched linear-quadratic regulator
(LQR) subproblems.  The standard tree Riccati recursion requires linear work but
has a dependency chain proportional to tree height.

We present an algebraically exact parallel solver based on rake--compress tree
contraction.  After independent local control condensation, its two operations
act on node and edge data that represent conditional quadratic functions.  A
rake eliminates a leaf and its parent edge, adding their reduced contribution
to the parent-node data.  A compress eliminates a unary node and replaces its
two adjacent edges by one edge, using the same conditional-value composition
as parallel Riccati methods on a chain.
Together they contract an arbitrary rooted tree to its root; reversing the
contraction recovers every Riccati coefficient, state, control, and multiplier.
Given a reusable topology plan, a solve with $N$ nodes and fixed state and
control dimensions has $O(N)$ arithmetic work and storage and $O(\log N)$ span,
independently of tree height, balance, and maximum out-degree.

The formulation allows positive-semidefinite dual regularization, including the
unregularized case, and an exact linear-size lifting covers
the standard scenario-MPC convention of one control per information node.  We
prove the contraction identities and equivalence to the
Karush--Kuhn--Tucker (KKT) system.  Three
MIT-licensed JAX packages implement the bidirectional contraction, the
dual-regularized LQR solver, and a user-facing primal--dual interior-point
solver for tree-structured optimal control.  Dense-KKT
comparisons validate the LQR implementation, accelerator benchmarks show
increasing speedups with tree size, and a constrained autonomous-driving
example exercises the complete stack on an irregular scenario tree.
\end{abstract}

\begin{IEEEkeywords}
Parallel algorithms, Riccati equations, scenario-tree model predictive
control, tree contraction.
\end{IEEEkeywords}

\section{Introduction}
\label{sec:introduction}

Scenario-tree MPC replaces one predicted future by a finite tree of information
histories.  A node represents the information available to the controller.
Decisions are shared while histories coincide and may differ only after a new
observation separates them.  This model is used in stochastic power
dispatch~\cite{patrinos2011dispatch}, dual control~\cite{arcari2020dual}, and
multimodal motion planning~\cite{chen2022branch}.  It also describes
fault-contingent and contact-contingent policies, and closely related tree
structures arise in control of directed physical networks
~\cite{zafar2020tree}.

The resulting trees can be large and highly irregular.  Uncertainty may resolve
at different times on different branches, unlikely modes may be pruned, and
terminal depths may depend on the realized outcome.  After linearization and
quadratic approximation, or after the usual eliminations in a primal--dual
interior-point method, the repeated linear-algebra kernel is a tree-structured
LQR problem~\cite{rao1998ipm,frison2017tree}.  A tree Riccati recursion solves it
with work linear in the number of nodes, but its backward and forward passes
have depth proportional to the longest root-to-leaf path.  Level parallelism
does not shorten a long trunk.

After independent local control condensation, the resulting node and edge data
are closed under the two eliminations required by rake--compress.  A
\emph{rake} eliminates a leaf and its parent edge and adds their reduced
contribution to the parent-node data.  A \emph{compress} eliminates a unary
node and replaces its two adjacent edges by one edge whose data include the
eliminated node term; this uses the associative conditional-value composition
introduced for parallel-scan LQR in~\cite{sarkka2023temporal}.
Rakes expose parallelism across branches; compressions expose it along
branches.  Repeating both operations contracts any rooted tree geometrically,
and reversing them recovers the quantities eliminated during contraction.

Throughout, \emph{exact} means algebraically equivalent in exact arithmetic to
solving the stated KKT system; it does not imply error-free floating-point
computation.

We apply this idea to a dual-regularized branched LQR problem.  The
regularization includes the unregularized dynamics-constrained problem and the
regularized Newton systems that arise in primal--dual interior-point methods.  The algorithm
uses the same topology schedule for factorization and for each subsequent
right-hand-side solve.  For a fixed scenario topology, the schedule is
constructed once and reused.  For a tree with $N$ nodes and fixed local
dimensions, each numerical solve then has
$O(N)$ work and storage and $O(\log N)$ span.  Here \emph{work} denotes the
total number of primitive numerical operations, and \emph{span} the longest
dependency chain under unlimited parallelism.  The numerical algorithm is
work-efficient because its work matches the $O(N)$ sequential recursion.

Every contracted component is represented by a conditional quadratic whose
matrix dimensions depend only on the boundary state dimension.  This
representation is closed
under both leaf and unary-node elimination and retains sufficient
residual information to reconstruct all eliminated primal and dual variables.
The contributions are as follows.

\begin{enumerate}
  \item We derive rake, compress, and reverse-expansion rules for
  dual-regularized Riccati factorization, affine solution, and state recovery.
  Their dimensions depend only on the state and control dimensions,
  not on the size of the represented subtree.  The derivation permits singular
  regularization, including zero.
  \item We prove exact equivalence to the branched KKT system and establish
  $O(N)$ numerical work and storage and $O(\log N)$ span for the resulting
  factorization and solve.
  \item We give an exact linear-size transformation from the usual
  nonanticipative node-control formulation of scenario MPC to the edge-control
  form used by the implementation.
  \item We give a reusable deterministic contraction schedule for arbitrary
  rooted trees.  It removes at least one third of the nodes present at the
  beginning of each structural round, with a tight bound, and its
  readiness-weighted sibling
  reductions yield $O(\log N)$ primitive dependency depth without a bound on
  out-degree.
  \item We provide open-source JAX packages for bidirectional rake--compress
  contraction~\cite{rakecompressjax}, regularized LQR~\cite{regularizedlqrjax},
  and primal--dual interior-point tree control through
  \texttt{primal-dual-lipa}~\cite{primalduallipa}.  We validate the LQR solver
  against independently assembled KKT systems and exercise the complete stack
  on a constrained tree-control example.
\end{enumerate}

\subsection{Related work}

Steinbach~\cite{steinbach2002treesparse} gave an early control-theoretic
formulation of convex programs on general scenario trees, including incoming,
outgoing, and implicit controls, and derived linear-work recursive KKT
algorithms.  An incoming control is local to a node and therefore to its
incoming edge, as in the normal form below.  Blomvall and
Lindberg~\cite{blomvall2002riccati} showed that the direction-finding problem
for the barrier subproblem of a primal interior-point method for multistage
stochastic programming is a linear-quadratic control problem over the scenario
tree.  Later sparse solvers include the tree Riccati factorization
in~\cite{frison2017tree} and the dual-Newton method in
\textsc{treeQP}~\cite{kouzoupis2019treeqp}.  These methods have linear work but
follow the height of the tree.

Temporally parallel Riccati methods for a chain take several distinct forms.
Nielsen and Axehill recursively reduce horizon blocks to a smaller
optimal-control master problem~\cite{nielsen2015parallel}.  Laine and Tomlin
solve endpoint-explicit subproblems in parallel and couple them through a
block-banded system for the link states; their reconstruction assumes this
system has full rank, which they justify under strict convexity
~\cite{laine2019parallel}.  S\"arkk\"a and Garc\'ia-Fern\'andez instead
represent conditional interval values by an associative composition evaluated
with parallel scans~\cite{sarkka2023temporal}.  The last construction is the
chain algebra used by our compress operation; dual-regularized chain
recursions retain this associative structure~\cite{sousapinto2026dual}.

Scenario decomposition exposes independent trajectories by dualizing
nonanticipativity or dynamics~\cite{klintberg2017dual}, and proximal methods can
scale this approach to very large trees~\cite{sampathirao2024massively}.
Hansknecht et al.~\cite{hansknecht2025treecoupled} study the broader class of
tree-coupled saddle-point systems.  Our result concerns the direct
optimal-control factorization: it retains the coupled tree and exploits a
bounded-dimensional Riccati closure to obtain linear work.

Miller and Reif introduced rake--compress tree
contraction~\cite{miller1985tree,miller1989part1}.  They define \textsc{Rake}
to remove all leaves, \textsc{Compress} to halve every maximal unary chain,
and \textsc{Contract} as the simultaneous application of both operations to
the tree.  They prove that $O(\log N)$ applications suffice.  A reduction of
$k$ sibling contributions has $O(\log k)$ local depth with bounded-arity
operations; their asynchronous variant overlaps such reductions with other
ready work and retains $O(\log N)$ total depth, rather than multiplying the
number of applications by $O(\log k)$.  Gazit, Miller, and Teng subsequently
gave a deterministic, work-optimal $O(\log N)$-time implementation for
arbitrary-degree trees on the exclusive-read exclusive-write (EREW) parallel
random-access machine (PRAM),
which forbids concurrent access to one memory
location~\cite{gazit1988optimal}.  Related contractions support marginal-probability and
maximum-a-posteriori queries in probabilistic graphical
models~\cite{delcher1996logarithmic,sumer2011adaptive}.

Our contraction uses the same rake and compress primitives; a new scheduler is
not required by the LQR algebra.  The difference is the rule that selects and
orders those primitives.  Classical \textsc{Contract} simultaneously removes
the current leaves and halves the current maximal unary chains, and its
arbitrary-degree guarantees use either asynchronous readiness or the isolation
and processor-allocation machinery developed for EREW execution.  We could
adapt one of those schedules, but instead use a simpler deterministic static
plan tailored to repeated numerical solves: each structural round first rakes
the leaves and then selects nonadjacent unary centers in the resulting tree;
high-degree rakes are expanded into readiness-weighted binary reductions, and
all elimination dependencies are recorded for reverse traversal.  The plan is
built once from the topology and reused for factorization and multiple
right-hand sides.

This choice changes the scheduler, not the contraction algebra.  In particular,
the resulting dependency graph is neither Miller and Reif's asynchronous
schedule nor Gazit, Miller, and Teng's EREW schedule, so their complexity
theorems cannot be applied to it directly.  \Cref{sec:complexity} therefore
proves geometric progress, linear work, and logarithmic span for the precise
plan used here.  Its dataflow permits concurrent reads but assigns a unique
writer to every output---the concurrent-read exclusive-write (CREW)
discipline.  The schedule is independent of the numerical data; the algorithm
instantiates it with the factorization and solve operations derived below.

The distinction from existing direct Riccati methods is as follows.  A standard
tree Riccati recursion has linear work, and subtrees that are ready at the same
time can be processed concurrently, but the dependency from a leaf to the root
still follows the tree height~\cite{frison2017tree}.  On a chain, recursive
master-problem reduction~\cite{nielsen2015parallel}, endpoint-explicit
decomposition with a full-rank link system~\cite{laine2019parallel}, and
associative conditional-value composition~\cite{sarkka2023temporal} provide
different forms of temporal parallelism.  None of them, by itself, specifies
how to contract a branching tree; our compress operation specifically uses the
last of these algebras.

Zhang et al.~\cite{zhang2025parallelbranch} partition the backward pass at the
last branching time.  The suffix beyond that time is a collection of
nonbranching paths, which they process concurrently using a conditional-value
scan along each path.  For the initial branching portion, their sparse method
applies an ordinary tree Riccati recursion: nodes at one stage can be processed
in parallel, but successive stages remain dependent.  Their alternative
flattens this portion into root-to-leaf paths, condenses the paths concurrently,
reconciles their shared controls, and solves the resulting dense control-space
system.

Our method instead factors the original sparse problem directly on an arbitrary
rooted tree.  It requires neither a distinguished branching time nor any other
prescribed topology pattern: rakes combine sibling subtrees and compressions
compose unary portions wherever they occur.  A topology scheduler coordinates
these eliminations, and the numerical passes operate on bounded-dimensional
Riccati data without flattening the tree into paths or forming a dense
global system.  The resulting exact factorization and solve have linear work
and storage and logarithmic span independently of tree height, balance, and
maximum out-degree.  The constrained nonlinear example in
\cref{sec:case-studies} embeds this solver in the primal--dual interior-point framework
developed in~\cite{sousapinto2026dual}.

\section{Branched dual-regularized LQR}
\label{sec:problem}

\subsection{Edge-control normal form}

Let $\T=(\V,\E,\rho)$ be a finite rooted tree.  Every edge
$e=(i,j)\in\E$ is directed from $i=\pa(j)$ to its child $j$; hence
$|\E|=|\V|-1$.  Write $\C(i)$ for the children of $i$ and
$\out(i)=\{(i,j)\in\E:j\in\C(i)\}$.  For clarity, use uniform dimensions
$x_i,y_i\in\R^n$ and $u_e\in\R^m$; the restriction to uniform dimensions is
not essential.

For symmetric $Q_i$, symmetric $R_e$, and symmetric dual regularizers
$\Delta_i$, define
\begin{align}
\mathcal{L}(x,u,y)
={}&\sum_{i\in\V}\left(\tfrac12 x_i^\trans Q_i x_i+q_i^\trans x_i\right)
\notag\\
&+\sum_{e=(i,j)\in\E}
\left(x_i^\trans M_eu_e+\tfrac12u_e^\trans R_eu_e\right)
\notag\\
&+\sum_{e=(i,j)\in\E}r_e^\trans u_e
\notag\\
&+y_\rho^\trans(c_\rho-x_\rho)-\tfrac12y_\rho^\trans\Delta_\rho y_\rho
\notag\\
&+\sum_{e=(i,j)\in\E}
y_j^\trans(A_ex_i+B_eu_e+c_j-x_j)
\notag\\
&-\tfrac12\sum_{e=(i,j)\in\E}y_j^\trans\Delta_jy_j.
\label{eq:lagrangian}
\end{align}
The branched dual-regularized LQR problem is
\begin{equation}
\max_y\min_{x,u}\;\mathcal{L}(x,u,y).
\label{eq:saddle}
\end{equation}
When $\Delta_i=0$, the maximization enforces the root and edge dynamics exactly.
Positive semidefinite $\Delta_i$ are the dual regularization blocks produced by
regularized primal--dual interior-point methods~\cite{sousapinto2026dual}.  Node probabilities in
a stochastic objective are absorbed into $(Q_i,q_i,M_e,R_e,r_e)$.

The following sufficient assumptions ensure that every local factorization
used below is well posed.

\begin{definition}[Standing convexity assumptions]
\label{def:assumptions}
For every edge $e$, $R_e\succ0$; for every node $i$, $\Delta_i\succeq0$; and
\begin{equation}
Q_i-\sum_{e\in\out(i)}M_eR_e^{-1}M_e^\trans\succeq0.
\label{eq:local-convexity}
\end{equation}
Empty sums are zero.
\end{definition}

\begin{proposition}[Well-posedness]
\label{prop:unique}
Under \cref{def:assumptions}, the KKT matrix for the saddle problem in
\cref{eq:saddle} is nonsingular, and the saddle point is unique.
\end{proposition}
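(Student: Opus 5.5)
The plan is to show directly that the KKT matrix has trivial kernel, using an energy argument, and then to get uniqueness of the saddle point from convex--concave structure.

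\textbf{Setup.} Collect the primal variables as $z=(x,u)$. The Lagrangian \cref{eq:lagrangian} then reads $\tfrac12 z^\trans H z + h^\trans z + y^\trans(Gz+c) - \tfrac12 y^\trans\Delta y$. Here $H$ is the block Hessian built from $Q_i,M_e,R_e$, and $G$ is the linear dynamics operator: its root row is $-x_\rho$ and its edge row for $e=(i,j)$ is $A_ex_i+B_eu_e-x_j$. Also $\Delta=\operatorname{diag}(\Delta_i)$. The KKT matrix is
\begin{equation*}
K=\begin{bmatrix} H & G^\trans\\ G & -\Delta\end{bmatrix}.
\end{equation*}

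\textbf{Kernel argument.} Suppose $Hz+G^\trans y=0$ and $Gz-\Delta y=0$.
\begin{itemize}
  \item Multiply the first equation by $z^\trans$ and the second by $y^\trans$, then subtract. This gives $z^\trans Hz+y^\trans\Delta y=0$.
  \item Show $H\succeq0$ by completing the square node by node:
  \begin{equation*}
  z^\trans Hz=\sum_i x_i^\trans\Bigl(Q_i-\sum_{e\in\out(i)}M_eR_e^{-1}M_e^\trans\Bigr)x_i+\sum_{e=(i,j)}\bigl\|u_e+R_e^{-1}M_e^\trans x_i\bigr\|_{R_e}^2 .
  \end{equation*}
  Each edge appears exactly once, in the sum of its tail node. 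By \cref{eq:local-convexity} every term is nonnegative, and $\Delta\succeq0$.
  \item Hence both quadratic forms vanish. Since $R_e\succ0$, this forces $u_e=-R_e^{-1}M_e^\trans x_i$ for every edge. Since $\Delta\succeq0$, it also forces $\Delta y=0$.
  \item The second KKT equation then reduces to $Gz=0$. That is, $x_\rho=0$ and $x_j=A_ex_i+B_eu_e$. Induct from the root outward: because $u_e$ is determined by $x_i$, each $x_j$ vanishes once its parent does. So all $x$, and then all $u$, vanish, giving $z=0$.
  \item With $z=0$, the first equation gives $G^\trans y=0$. The operator $G$ is surjective: for any residual one can choose $u=0$ and solve for $x$ forward from the root, since each $x_j$ appears with coefficient $-I$ in exactly one row. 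Therefore $G^\trans$ is injective and $y=0$.
\end{itemize}
This proves $K$ is nonsingular.

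\textbf{Uniqueness of the saddle point.} Since $H\succeq0$ and $-\Delta\preceq0$, $\mathcal{L}$ is convex in $(x,u)$ and concave in $y$. For such a smooth convex--concave function, the saddle points are exactly the stationary points, namely the solutions of $K(z,y)=\text{rhs}$. Nonsingularity of $K$ gives exactly one such point.

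\textbf{Main obstacle.} The only delicate step is extracting $z=0$ when $H$ is merely semidefinite. The completed-square identity, which uses \cref{eq:local-convexity} and $R_e\succ0$, pins the controls to a feedback of the parent state. Combined with the tree-structured forward propagation from $x_\rho=0$, this forces the states to vanish. This part requires care with the indexing so that each edge is accounted for exactly once, at its tail node.
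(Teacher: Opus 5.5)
Your proposal is correct and takes essentially the same route as the paper. It uses the energy identity $z^\trans Hz+y^\trans\Delta y=0$, the completed-square form of $H$ under \cref{eq:local-convexity}, forward induction from $x_\rho=0$ to get $z=0$, full row rank of the dynamics operator (each $x_j$ enters with coefficient $-I$) to get $y=0$, and convex--concavity for uniqueness of the saddle point; the only cosmetic difference is that you read off $u_e=-R_e^{-1}M_e^\trans x_i$ directly from the vanishing squares instead of first stating positive definiteness of $H$ on $\ker G$.
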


\begin{proof}
Let $H$ be the primal Hessian and $D$ the block diagonal matrix with blocks
$\Delta_i$.  Completing each control square rewrites the quadratic form of $H$
as
\begin{align*}
z^\trans Hz={}&\sum_{i\in\V}x_i^\trans
\left(Q_i-\sum_{e\in\out(i)}M_eR_e^{-1}M_e^\trans\right)x_i\\
&+\sum_{e=(i,j)}\left\|u_e+R_e^{-1}M_e^\trans x_i\right\|_{R_e}^2,
\end{align*}
where $\|v\|_R^2=v^\trans Rv$,
so $H\succeq0$.  Let $C$ denote the dynamics Jacobian.  Its columns associated
with node states contain a permuted block-triangular submatrix with diagonal
$-I$, hence $C$ has full row rank.  If $z=(x,u)\in\ker C$ and $z^\trans Hz=0$,
the homogeneous root constraint gives $x_\rho=0$.  The completed square on each
outgoing edge then gives $u_e=0$, and the homogeneous dynamics give $x_j=0$;
induction down the tree yields $z=0$.  Thus $H$ is positive definite on
$\ker C$.

Now suppose
$\bigl[\begin{smallmatrix}H&C^\trans\\C&-D\end{smallmatrix}\bigr]
\bigl[\begin{smallmatrix}z\\y\end{smallmatrix}\bigr]=0$.
Taking inner products of the two block equations with $z$ and $y$ gives
$z^\trans Hz+y^\trans Dy=0$.  Both terms are nonnegative, so $Hz=0$ and
$Dy=0$; the second block equation gives $Cz=0$.  Positive definiteness on
$\ker C$ implies $z=0$, after which $C^\trans y=0$ and full row rank imply
$y=0$.  The KKT matrix is therefore nonsingular.  Its unique stationary point
is the unique saddle point because the Lagrangian is convex--concave.
\end{proof}

The following KKT equations also serve as the end-to-end correctness
specification.  The node-stationarity equation is imposed for every
$i\in\V$, and the control-stationarity and dynamics equations for every
$e=(i,j)\in\E$:
\begin{subequations}
\label{eq:kkt}
\begin{align}
Q_ix_i+q_i-y_i
 +\sum_{e=(i,j)\in\out(i)}(M_eu_e+A_e^\trans y_j)&=0,
\label{eq:kkt-x}\\
M_e^\trans x_i+R_eu_e+r_e+B_e^\trans y_j&=0,
\label{eq:kkt-u}\\
c_\rho-x_\rho-\Delta_\rho y_\rho&=0,\label{eq:kkt-root}\\
A_ex_i+B_eu_e+c_j-x_j&=\Delta_jy_j.
\label{eq:kkt-dyn}
\end{align}
\end{subequations}

\subsection{Nonanticipative node controls}
\label{sec:lifting}

The edge-control form \eqref{eq:lagrangian} is convenient for a uniform local
algebra, but scenario MPC usually assigns one control $v_i$ to each nonterminal
information node:
\begin{equation}
x_j=A_{ij}x_i+B_{ij}v_i+c_j,\qquad j\in\C(i).
\label{eq:node-control}
\end{equation}
The same $v_i$ appears in every outgoing transition because the next
observation has not yet been revealed.  Thus copying $v_i$ independently onto
those edges would violate nonanticipativity.

Let $\mathcal L\subset\V$ be the leaf set.  The corresponding saddle function is
\begin{align}
\mathcal L_{\rm node}={}&
\sum_{i\in\V}\left(\tfrac12x_i^\trans Q_ix_i+q_i^\trans x_i\right)\notag\\
&+\sum_{i\notin\mathcal L}\left(
x_i^\trans M_iv_i+\tfrac12v_i^\trans R_iv_i+r_i^\trans v_i\right)\notag\\
&+y_\rho^\trans(c_\rho-x_\rho)-\tfrac12y_\rho^\trans\Delta_\rho y_\rho\notag\\
&+\sum_{(i,j)\in\E}y_j^\trans
(A_{ij}x_i+B_{ij}v_i+c_j-x_j)\notag\\
&-\tfrac12\sum_{j\ne\rho}y_j^\trans\Delta_jy_j,
\label{eq:node-lagrangian}
\end{align}
which charges the control cost once at $i$.  A sufficient analogue of the
assumptions in \cref{def:assumptions} is
\begin{equation}
\begin{aligned}
R_i&\succ0 &&(i\notin\mathcal L),\\
\Delta_i&\succeq0 &&(i\in\V),\\
Q_i-M_iR_i^{-1}M_i^\trans&\succeq0 &&(i\notin\mathcal L),\\
Q_i&\succeq0 &&(i\in\mathcal L).
\end{aligned}
\label{eq:native-assumptions}
\end{equation}

The node-control problem has an exact linear-size edge-control representation.
Replace $i$ by a predecision node $i^-$ and, if $i$ is nonterminal, a decision
node $i^+$.  With augmented state dimension $n+m$, define
\begin{equation}
s_{i^-}=\begin{bmatrix}x_i\\0\end{bmatrix},\qquad
s_{i^+}=\begin{bmatrix}x_i\\v_i\end{bmatrix}.
\end{equation}
The decision edge $i^-\to i^+$ carries $v_i$:
\begin{equation}
s_{i^+}=
\begin{bmatrix}I&0\\0&0\end{bmatrix}s_{i^-}
+\begin{bmatrix}0\\I\end{bmatrix}v_i.
\label{eq:decision-edge}
\end{equation}
Each uncertainty edge $(i,j)$ becomes an uncontrolled edge $i^+\to j^-$:
\begin{equation}
s_{j^-}=
\begin{bmatrix}A_{ij}&B_{ij}\\0&0\end{bmatrix}s_{i^+}
+\begin{bmatrix}c_j\\0\end{bmatrix}.
\label{eq:uncertainty-edge}
\end{equation}
A dummy control with zero input and a positive-definite quadratic cost makes
\eqref{eq:uncertainty-edge} fit the uniform edge API; its unique value is zero.
State costs are placed at $i^-$, the original control cost on
$i^-\to i^+$, and the regularization
$\operatorname{diag}(\Delta_j,0)$ on the constraint entering $j^-$.  All other
new costs and regularizers are zero.

\begin{proposition}[Scenario-MPC normal form]
\label{prop:lifting}
The KKT solutions of $\max_y\min_{x,v}\mathcal L_{\rm node}$ and of the lifted
saddle problem in \cref{eq:saddle} are in one-to-one correspondence.  In
particular, projection returns the same $(x_i,v_i,y_i)$, with all dummy controls
equal to zero.  The lifted tree has
$|\V|+|\V\setminus\mathcal L|\leq2|\V|-1$ nodes, does not increase the maximum
branching factor, and makes every nonanticipative
control a single edge variable before branching.  Moreover, the conditions in
\cref{eq:native-assumptions} imply the assumptions in
\cref{def:assumptions} for the lifted data.
\end{proposition}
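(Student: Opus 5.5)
We prove the correspondence by writing out the KKT system \cref{eq:kkt} for the lifted data and eliminating the auxiliary quantities it introduces. Partition each lifted multiplier as $y=(y^x,y^v)$ to match $s=(x,v)$, and let $w_{ij}$ denote the dummy control on the uncertainty edge $i^+\to j^-$.

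\emph{Lifted dynamics.} The root and decision-edge constraints have zero regularization in the lifted problem, since all new regularizers are zero.
\begin{itemize}
\item The root constraint in \cref{eq:kkt-root} gives $s_{\rho^-}=(c_\rho-\Delta_\rho y_\rho^x,0)$.
\item The decision edge \cref{eq:decision-edge} forces $s_{i^+}=(x_{i^-},v_i)$ with no lag.
\item The uncertainty edge, with regularizer $\operatorname{diag}(\Delta_j,0)$, forces the second block of $s_{j^-}$ to be zero.
\item The first block of the uncertainty edge then reproduces \cref{eq:node-control} with regularization $\Delta_jy_j^x$.
\end{itemize}

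\emph{Dummy controls.} The dummy control has $B_e=0$, $M_e=0$, $r_e=0$ and $R_e\succ0$. Hence \cref{eq:kkt-u} gives $w_{ij}=0$.

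\emph{Multipliers at $i^+$.} Next, stationarity \cref{eq:kkt-x} at $i^+$ gives
\begin{equation*}
y_{i^+}=\sum_{j\in\C(i)}\begin{bmatrix}A_{ij}&B_{ij}\\0&0\end{bmatrix}^\trans y_{j^-}.
\end{equation*}
Its first block is $\sum_j A_{ij}^\trans y^x_{j^-}$ and its second block is $\sum_j B_{ij}^\trans y^x_{j^-}$. Note that $i^+$ carries no state cost.

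\emph{Stationarity of $v_i$.} Substitute this into \cref{eq:kkt-u} for the decision edge, whose input matrix is $[0;I]$. This gives
\begin{equation*}
M_i^\trans x_i+R_iv_i+r_i+\sum_jB_{ij}^\trans y^x_{j^-}=0,
\end{equation*}
which is exactly $\partial_{v_i}\mathcal L_{\rm node}=0$.

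\emph{Stationarity at $i^-$.} Stationarity at $i^-$ uses the first block, because the decision-edge $A$-matrix is $\operatorname{diag}(I,0)$. This yields the node equation for $x_i$ with $y_i=y^x_{i^-}$. The second block reads $Q$-terms $\;=0$ minus $y^v_{i^-}=0$, so $y^v_{i^-}=0$.

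\emph{The bijection.} Project a lifted KKT point to $(x_i,v_i,y_i)=(s^x_{i^-},s^v_{i^+},y^x_{i^-})$. The computations above show that the projection is a node KKT point. Conversely, given a node KKT point, define all lifted variables by the explicit formulas above; each auxiliary quantity is uniquely determined. These formulas give the inverse map, which establishes the bijection.

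\emph{Counting.} Each node yields one $i^-$, and each nonterminal node also yields one $i^+$. The count is $|\V|+|\V\setminus\mathcal L|\le 2|\V|-1$, because the root-containing tree has at least one leaf. Each $i^-$ has at most one child. Each $i^+$ has the children of $i$. So the maximum branching factor is unchanged, and $v_i$ sits on the single edge $i^-\to i^+$, which comes before branching.

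\emph{Assumptions.} Finally we check \cref{def:assumptions}. $R_e\succ0$ holds by construction, and the lifted regularizers are diagonal blocks of PSD matrices. The main obstacle is \cref{eq:local-convexity}, which must be verified node by node.
\begin{itemize}
\item At a nonterminal $i^-$, the condition becomes $\operatorname{diag}(Q_i,0)-[M_i;0]R_i^{-1}[M_i;0]^\trans=\operatorname{diag}(Q_i-M_iR_i^{-1}M_i^\trans,0)\succeq0$.
\item At a leaf $i^-$, it is $\operatorname{diag}(Q_i,0)\succeq0$.
\item At $i^+$, all outgoing $M_e$ are zero and the state cost is zero, so the condition holds trivially.
\end{itemize}
We must take care that the lifted state cost at $i^-$ is $\operatorname{diag}(Q_i,0)$ with linear term $(q_i,0)$, and that the cross term $M_e$ on the decision edge is $[M_i;0]$. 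These choices are consistent with placing the original costs as stated.
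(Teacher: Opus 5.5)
Your proposal is correct and follows the paper's proof. You reduce the lifted KKT system block by block: the decision-edge and uncertainty-edge dynamics, the zero dummy controls, and the decision-edge multiplier obtained from stationarity at $i^+$, whose lower block gives native control stationarity and whose upper block, together with stationarity at $i^-$, gives native state stationarity. You then count nodes and check \cref{eq:local-convexity} at $i^-$, at leaf $i^-$, and at $i^+$, as the paper does; the one difference is that you derive $y^v_{i^-}=0$ from the second block of stationarity at $i^-$ instead of positing the lift $(y_i,0)$. There is one wording slip: the root constraint is not unregularized, since it enters $\rho^-$ and carries $\operatorname{diag}(\Delta_\rho,0)$, which is exactly what your formula $s_{\rho^-}=(c_\rho-\Delta_\rho y^x_\rho,0)$ uses.
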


\begin{proof}
Equation~\eqref{eq:decision-edge} forces $s_{i^+}=(x_i,v_i)$, after which
\eqref{eq:uncertainty-edge} reproduces every child transition with the same
$v_i$.  Conversely, each native primal trajectory defines a unique lifted
trajectory.  The dummy-control stationarity equations set those controls to
zero, and the objectives and regularized constraints then agree term by term.

Lift the multiplier entering $i^-$ as $(y_i,0)$.  Stationarity with respect to
$s_{i^+}$ sets the multiplier on the decision edge to
\[
\begin{bmatrix}
\sum_{j\in\C(i)}A_{ij}^\trans y_j\\[1mm]
\sum_{j\in\C(i)}B_{ij}^\trans y_j
\end{bmatrix}.
\]
Its lower block makes decision-edge control stationarity
\[
M_i^\trans x_i+R_iv_i+r_i+
\sum_{j\in\C(i)}B_{ij}^\trans y_j=0,
\]
which is native control stationarity.  Its upper block, together with
stationarity at $i^-$, gives native state stationarity.  These relations
uniquely determine the lifted multipliers from the native ones and are
reversible without requiring $\Delta_i$ to be nonsingular.

Only nonterminal nodes acquire a decision copy, so the node count is
$|\V|+|\V\setminus\mathcal L|\leq2|\V|-1$.  Each $i^-$ has one child, whereas
$i^+$ has the original children of $i$, proving the branching claim.  Finally,
the left side of \cref{eq:local-convexity} at $i^-$ is
\[
\operatorname{diag}
\left(Q_i-M_iR_i^{-1}M_i^\trans,0\right)\succeq0.
\]
It is zero at every decision node, and at a lifted terminal node it is
$\operatorname{diag}(Q_i,0)\succeq0$.  The lifted regularizers are positive
semidefinite, and every original or dummy edge has a positive-definite control
cost.  Thus \cref{eq:native-assumptions} implies
\cref{def:assumptions}.
\end{proof}

The lifting establishes that the edge-control solver covers the standard
scenario-MPC model without duplicating decisions and preserves linear problem
size, so the complexity results below apply unchanged.  A native node-control
derivation could avoid the augmented state, but would have to carry a decision
shared by several outgoing edges through the sibling reductions and coordinate
its eventual elimination.  The edge-control normal form instead localizes each
control to one transition, yielding a uniform algebra for both rake and
compress operations.

\section{Rake--compress tree contraction}
\label{sec:rake-compress}
\begin{figure*}[!t]
\centering
\begin{tikzpicture}[
  x=1cm,y=1cm,
  v/.style={circle,draw,minimum size=5.2mm,inner sep=0pt,font=\small},
  kept/.style={v,fill=blue!7},
  gone/.style={v,fill=orange!14},
  edge/.style={line width=0.8pt},
  op/.style={-{Latex[length=2mm]},line width=0.9pt},
  note/.style={font=\footnotesize,align=center}
]
\node[note,font=\small\bfseries] at (3.15,1.05)
  {(a) Rake: remove a leaf};
\node[kept] (ri) at (0.7,0.25) {$i$};
\node[kept] (rh) at (0.05,-0.85) {};
\node[gone] (rj) at (1.35,-0.85) {$j$};
\draw[edge] (ri)--(rh);
\draw[edge] (ri)--(rj);
\node[note] at (0.7,-1.45) {before};
\draw[op] (1.85,-0.35)--(2.75,-0.35);
\node[kept] (ri2) at (3.4,0.25) {$i$};
\node[kept] (rh2) at (2.75,-0.85) {};
\draw[edge] (ri2)--(rh2);
\node[note] at (3.1,-1.45) {after};
\node[note,text width=3.0cm] at (5.15,-0.35)
  {remove $j$ and $(i,j)$;\\update the data at $i$};

\node[note,font=\small\bfseries] at (11.15,1.05)
  {(b) Compress: remove a unary node};
\node[kept] (ci) at (8.1,0.35) {$i$};
\node[gone] (cj) at (9.25,-0.35) {$j$};
\node[kept] (ck) at (10.4,-1.05) {$k$};
\draw[edge] (ci)--(cj)--(ck);
\node[note] at (9.25,-1.55) {before};
\draw[op] (10.85,-0.35)--(11.75,-0.35);
\node[kept] (ci2) at (12.35,0.25) {$i$};
\node[kept] (ck2) at (13.75,-0.95) {$k$};
\draw[edge] (ci2)--(ck2);
\node[note] at (13.05,-1.55) {after};
\node[note,text width=2.2cm] at (15.25,-0.30)
  {remove $j$, $(i,j)$, and $(j,k)$;\\create $(i,k)$ and its data};
\end{tikzpicture}
\caption{The two rake--compress primitives.  Orange nodes are removed and blue
nodes remain in the tree.  Reverse expansion later reconstructs each removed
node from the data saved during contraction.}
\label{fig:rake-compress}
\end{figure*}

\begin{figure*}[!t]
\centering
\begin{tikzpicture}[
  x=1cm,y=1cm,
  sched/.style={circle,draw,minimum size=6.4mm,inner sep=0pt,
                font=\scriptsize},
  sroot/.style={sched,fill=blue!8},
  srake/.style={sched,fill=orange!18},
  scomp/.style={sched,fill=violet!14},
  sedge/.style={line width=0.75pt},
  stitle/.style={font=\small\bfseries},
  snote/.style={font=\scriptsize,align=center}
]
\node[stitle] at (3.55,6.9) {(a) Chain};
\node[sroot] (a0) at (0.55,5.75) {$\rho$};
\node[scomp] (a1) at (1.55,5.75) {$\mathrm C_1$};
\node[scomp] (a2) at (2.55,5.75) {$\mathrm C_2$};
\node[scomp] (a3) at (3.55,5.75) {$\mathrm C_1$};
\node[srake] (a4) at (4.55,5.75) {$\mathrm R_3$};
\node[srake] (a5) at (5.55,5.75) {$\mathrm R_2$};
\node[srake] (a6) at (6.55,5.75) {$\mathrm R_1$};
\draw[sedge] (a0)--(a1)--(a2)--(a3)--(a4)--(a5)--(a6);
\node[snote] at (3.55,4.95) {three structural rounds};

\node[stitle] at (11.85,6.9) {(b) Star};
\node[sroot] (b0) at (11.85,6.05) {$\rho$};
\node[srake] (b1) at (9.20,4.95) {$\mathrm R_1$};
\node[srake] (b2) at (10.20,4.55) {$\mathrm R_1$};
\node[srake] (b3) at (11.30,4.40) {$\mathrm R_1$};
\node[srake] (b4) at (12.40,4.40) {$\mathrm R_1$};
\node[srake] (b5) at (13.50,4.55) {$\mathrm R_1$};
\node[srake] (b6) at (14.50,4.95) {$\mathrm R_1$};
\foreach \x in {1,...,6} {\draw[sedge] (b0)--(b\x);}
\node[snote] at (11.85,3.92) {one structural round};

\node[stitle] at (3.55,3.35) {(c) Balanced binary tree};
\node[sroot] (c0) at (3.55,2.65) {$\rho$};
\node[srake] (c1) at (2.05,1.75) {$\mathrm R_2$};
\node[srake] (c2) at (5.05,1.75) {$\mathrm R_2$};
\node[srake] (c3) at (1.25,0.65) {$\mathrm R_1$};
\node[srake] (c4) at (2.85,0.65) {$\mathrm R_1$};
\node[srake] (c5) at (4.25,0.65) {$\mathrm R_1$};
\node[srake] (c6) at (5.85,0.65) {$\mathrm R_1$};
\draw[sedge] (c0)--(c1) (c0)--(c2);
\draw[sedge] (c1)--(c3) (c1)--(c4);
\draw[sedge] (c2)--(c5) (c2)--(c6);
\node[snote] at (3.55,0.02) {two structural rounds};

\node[stitle] at (11.85,3.35) {(d) Irregular comb};
\node[sroot] (d0) at (8.55,2.65) {$\rho$};
\node[scomp] (d1) at (9.65,2.30) {$\mathrm C_1$};
\node[srake] (d2) at (10.80,1.90) {$\mathrm R_3$};
\node[srake] (d3) at (9.70,1.15) {$\mathrm R_1$};
\node[scomp] (d4) at (12.00,1.50) {$\mathrm C_1$};
\node[srake] (d5) at (13.20,1.10) {$\mathrm R_2$};
\node[srake] (d6) at (12.10,0.35) {$\mathrm R_1$};
\node[srake] (d7) at (14.40,0.70) {$\mathrm R_1$};
\draw[sedge] (d0)--(d1);
\draw[sedge] (d1)--(d2) (d1)--(d3);
\draw[sedge] (d2)--(d4);
\draw[sedge] (d4)--(d5) (d4)--(d6);
\draw[sedge] (d5)--(d7);
\node[snote] at (14.35,2.22) {three structural rounds};
\end{tikzpicture}
\caption{Representative forward contraction schedules.  Orange nodes are
raked, violet nodes are compressed, and the subscript is the structural round.
Within a round, rakes precede compressions.  The root remains after the
displayed removals, and reverse expansion follows the labels in decreasing
dependency order.}
\label{fig:example-schedules}
\end{figure*}

Rake--compress is a topology-only procedure for reducing a rooted tree to its
root and subsequently reconstructing it~\cite{miller1985tree,miller1989part1}.
It is independent of the numerical problem carried by the tree.  Nodes and
edges may carry application-specific data, but the contraction schedule
depends only on the parent--child relation.

The forward contraction uses two primitives, illustrated in
\cref{fig:rake-compress}.
\operation{Rake.}
Remove a nonroot leaf and its parent edge.  A numerical instantiation combines
the data associated with the removed node and edge into the parent-node data.

\operation{Compress.}
Remove a nonroot unary node and replace its incoming and outgoing edges by one
edge joining its predecessor and successor.  A numerical instantiation
combines the data associated with the two removed edges and the removed node
into the new edge data.
Operations on independent leaves or unary centers can be performed
concurrently.  Consecutive compression centers cannot both be selected, but
two compressions may share a retained endpoint: for example, the path
$a\to b\to c\to d\to e$ can become $a\to c\to e$ in one round by
compressing $b$ and $d$.

A structural round first rakes every nonroot leaf and then compresses a
maximal set of pairwise nonadjacent unary centers.  Sibling contributions are
combined without concurrent writes to their parent.  Repeating these rounds
eventually leaves only the root.  Reverse expansion visits the recorded
operations in the opposite dependency order, undoing compressions before
rakes within each round.  \Cref{fig:example-schedules} shows the resulting
patterns on four representative trees.  The labels $\mathrm R_t$ and
$\mathrm C_t$ identify nodes raked and compressed in structural round $t$;
nodes with the same label can be processed concurrently.

This section describes only the topology-level contraction.  The next two
sections define its numerical data for factorization and solve.
Section~\ref{sec:complexity} proves geometric progress of the structural rounds
and constructs the numerical dependency graph, including the reductions of
sibling contributions.  That graph permits operations from different
structural rounds to overlap and is the graph on which span is measured.

\section{Rake--compress factorization}
\label{sec:quadratic-algebra}

We now instantiate the topology operations from
\cref{sec:rake-compress} with conditional quadratic data.
The factorization first eliminates every control locally, then contracts the
resulting node and edge data, and finally reverses the contraction to recover
the Hessians of the reduced subtree value functions required by the solve.
This section derives these operations and proves their correctness and
well-posedness.

The KKT matrix in \cref{eq:kkt} depends only on
$(Q,M,R,A,B,\Delta)$; the vectors $(q,r,c)$ form its right-hand side.
Accordingly, factorization first sets $(q,r,c)=0$ and computes the homogeneous
quadratic factors.  This is not an assumption that the original LQR problem
has zero linear terms.  Section~\ref{sec:affine} subsequently applies the
factorization to arbitrary $(q,r,c)$.  The same separation occurs when the
problem is a Newton subproblem: second derivatives, constraint Jacobians, and
regularization determine the KKT matrix, whereas the current KKT residual
determines the right-hand side.  A single factorization can therefore
serve several Newton right-hand sides, including predictor--corrector steps,
second-order corrections, and iterative-refinement solves.

\subsection{Conditional-value elimination algebra}

At the start of contraction, node $i$ stores
\begin{equation}
U_i=Q_i.
\label{eq:initial-node-data}
\end{equation}
For each original edge $e=(i,j)$, eliminate $u_e$ from the homogeneous edge
term: solve its stationarity equation with respect to $u_e$ and substitute the
result.  This gives the edge triple
\begin{equation}
\alpha_e=(\bar A_e,\bar C_e,\bar P_e),
\label{eq:edge-triple}
\end{equation}
where
\begin{align}
\bar A_e&=A_e-B_eR_e^{-1}M_e^\trans,\notag\\
\bar C_e&=\Delta_j+B_eR_e^{-1}B_e^\trans,\notag\\
\bar P_e&=-M_eR_e^{-1}M_e^\trans.
\label{eq:edge-components}
\end{align}
These edgewise eliminations are independent and are the only control
eliminations performed during contraction.  Moreover,
$\bar C_e=\bar C_e^\trans\succeq0$ under \cref{def:assumptions}.

At any stage of contraction, each node $i$ carries a matrix $U_i$, and each
edge $e$ carries a triple
$\alpha_e=(\mathcal A_e,\mathcal C_e,\mathcal P_e)$, initialized by
$(\mathcal A_e,\mathcal C_e,\mathcal P_e)
=(\bar A_e,\bar C_e,\bar P_e)$.  A rake of leaf $j$ with
parent $i$ eliminates node $j$, its state variable $x_j$, and edge $(i,j)$;
the reduced value of the eliminated node and edge terms is added to $U_i$.  A
compression of a unary node $j$ eliminates node $j$ and $x_j$, and replaces
its incoming and outgoing edges by one edge between the retained endpoints.

For an edge $e=(i,j)$, the triple $\alpha_e$ represents the extended-real-valued
conditional quadratic function
\begin{equation}
\Phi_e(x_i,x_j)=\tfrac12x_i^\trans \mathcal P_ex_i+
\sup_\lambda\left\{\lambda^\trans(\mathcal A_ex_i-x_j)
-\tfrac12\lambda^\trans \mathcal C_e\lambda\right\}.
\label{eq:path-value}
\end{equation}
Stationarity in $\lambda$ gives
$x_j=\mathcal A_ex_i-\mathcal C_e\lambda$.  If $\mathcal C_e$ is singular, the supremum is $+\infty$ unless
$\mathcal A_ex_i-x_j\in\operatorname{range}\mathcal C_e$; hence the representation
includes exact dynamics and does not require $\mathcal C_e^{-1}$.

For a rake of leaf $j$ with parent edge $e=(i,j)$, the algebraic update combines
$\alpha_e$ with the node data $U_j$.  The resulting
contribution to the parent-node data is
\begin{equation}
\tau(\alpha_e,U_j)=\mathcal P_e+\mathcal A_e^\trans
U_j(I+\mathcal C_eU_j)^{-1}\mathcal A_e.
\label{eq:terminal-fold}
\end{equation}
For a compression at $j$, the data on the consecutive edges
$e_\ell=(i,j)$ and $e_r=(j,k)$ must be combined with the node data $U_j$.
Define
\begin{equation}
\widetilde{\mathcal P}_{e_r}=\mathcal P_{e_r}+U_j,
\qquad
S=I+\mathcal C_{e_\ell}\widetilde{\mathcal P}_{e_r},
\label{eq:compress-intermediates}
\end{equation}
and
\begin{subequations}
\label{eq:composition}
\begin{align}
\mathcal A_{e_{\ell r}}&=\mathcal A_{e_r}S^{-1}\mathcal A_{e_\ell},\label{eq:comp-A}\\
\mathcal C_{e_{\ell r}}&=\mathcal C_{e_r}+\mathcal A_{e_r}S^{-1}
\mathcal C_{e_\ell}\mathcal A_{e_r}^\trans,\label{eq:comp-C}\\
\mathcal P_{e_{\ell r}}&=\mathcal P_{e_\ell}+\mathcal A_{e_\ell}^\trans
\widetilde{\mathcal P}_{e_r}S^{-1}\mathcal A_{e_\ell}.
\label{eq:comp-P}
\end{align}
\end{subequations}
We write
$\alpha_{e_{\ell r}}=\alpha_{e_r}\circ_{U_j}\alpha_{e_\ell}$.

\begin{lemma}[Rake and compress elimination identities]
\label{lem:elimination-identities}
Suppose the edge data above satisfy
$\mathcal C_e,\mathcal C_{e_\ell},\mathcal C_{e_r}\succeq0$ and
$\mathcal P_e=\mathcal P_e^\trans$,
$\mathcal P_{e_\ell}=\mathcal P_{e_\ell}^\trans$.  If $U_j\succeq0$ for the
rake and $\widetilde{\mathcal P}_{e_r}\succeq0$ for the compress, then the
updates are well defined and
\begin{align}
\inf_{x_j}\left\{\Phi_e(x_i,x_j)+\tfrac12x_j^\trans U_jx_j\right\}
&=\tfrac12x_i^\trans\tau(\alpha_e,U_j)x_i,\label{eq:fold-identity}\\
\inf_{x_j}\bigl\{\Phi_{e_\ell}(x_i,x_j)+\tfrac12x_j^\trans U_jx_j
&\notag\\[-1mm]
\qquad{}+\Phi_{e_r}(x_j,x_k)\bigr\}
&=\Phi_{e_{\ell r}}(x_i,x_k).
\label{eq:compose-identity}
\end{align}
Moreover,
\begin{align}
\tau(\alpha_e,U_j)-\mathcal P_e&\succeq0,
&\mathcal C_{e_{\ell r}}&\succeq0,\notag\\
\mathcal P_{e_{\ell r}}-\mathcal P_{e_\ell}&\succeq0.
\label{eq:elimination-psd}
\end{align}
In particular, $\tau(\alpha_e,U_j)$,
$\mathcal C_{e_{\ell r}}$, and $\mathcal P_{e_{\ell r}}$ are symmetric.
The identities hold as equalities of extended-real-valued convex functions
(whose values may be $+\infty$) when $\mathcal C_e$,
$\mathcal C_{e_\ell}$, or $\mathcal C_{e_r}$ is singular.
\end{lemma}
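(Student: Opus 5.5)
The plan is to work entirely in the dual (conditional-value) representation \eqref{eq:path-value} and to exchange the infimum over $x_j$ with the suprema over multipliers. Convexity in $x_j$ and concavity in $\lambda$ make this exchange legitimate. We first treat the case in which every $\mathcal C$ is positive definite and then pass to the singular case.

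First we establish invertibility. For $C\succeq0$ and $U\succeq0$, the matrix $I+CU$ is nonsingular. Indeed, if $(I+CU)v=0$, then $v=-CUv$, so $v^\trans Uv=-(Uv)^\trans C(Uv)\le0$. This forces $v^\trans Uv=0$, hence $U^{1/2}v=0$ and therefore $Uv=0$, which gives $v=0$. The same argument applied to $S=I+\mathcal C_{e_\ell}\widetilde{\mathcal P}_{e_r}$ shows that the updates are well defined. We will also use the push-through identities
\[
U(I+CU)^{-1}=(I+UC)^{-1}U,\qquad
(I+CU)^{-1}C=C(I+UC)^{-1},
\]
together with the factored forms
\[
U(I+CU)^{-1}=U^{1/2}\bigl(I+U^{1/2}CU^{1/2}\bigr)^{-1}U^{1/2}.
\]
These give symmetry and positive semidefiniteness of $U(I+CU)^{-1}$ and of $S^{-1}\mathcal C_{e_\ell}$, and hence the claims in \eqref{eq:elimination-psd}. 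For instance, $\widetilde{\mathcal P}_{e_r}S^{-1}=\widetilde{\mathcal P}_{e_r}(I+\mathcal C_{e_\ell}\widetilde{\mathcal P}_{e_r})^{-1}\succeq0$, and congruence by $\mathcal A_{e_\ell}$ preserves this.

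For the rake identity \eqref{eq:fold-identity}, we write the objective as $\tfrac12x_i^\trans\mathcal P_ex_i+\inf_{x_j}\sup_\lambda\{\cdots\}$ and swap to $\sup_\lambda\inf_{x_j}$. Minimizing over $x_j$ gives $-\tfrac12\lambda^\trans U_j^{\dagger}\lambda$ on $\operatorname{range}U_j$ and $-\infty$ off it. A cleaner route, which handles singular $U_j$, is to solve the joint stationarity system instead:
\[
U_jx_j=\lambda,\qquad x_j=\mathcal A_ex_i-\mathcal C_e\lambda .
\]
This yields $x_j=(I+\mathcal C_eU_j)^{-1}\mathcal A_ex_i$ and $\lambda=U_j(I+\mathcal C_eU_j)^{-1}\mathcal A_ex_i$. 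The pair is a saddle point of the convex--concave function, so its value equals the inf-sup. Substituting, the value reduces to $\tfrac12x_i^\trans\mathcal A_e^\trans U_j(I+\mathcal C_eU_j)^{-1}\mathcal A_ex_i$ plus the $\mathcal P_e$ term.

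For the compress identity \eqref{eq:compose-identity}, we introduce multipliers $\lambda_\ell$ and $\lambda_r$, absorb $\tfrac12x_j^\trans U_jx_j$ into $\mathcal P_{e_r}$ to form $\widetilde{\mathcal P}_{e_r}$, and treat $x_k$ as a parameter. The stationarity conditions are
\[
x_j=\mathcal A_{e_\ell}x_i-\mathcal C_{e_\ell}\lambda_\ell,\qquad
\lambda_\ell=\widetilde{\mathcal P}_{e_r}x_j+\mathcal A_{e_r}^\trans\lambda_r,\qquad
x_k=\mathcal A_{e_r}x_j-\mathcal C_{e_r}\lambda_r .
\]
Eliminating $x_j$ gives $Sx_j=\mathcal A_{e_\ell}x_i-\mathcal C_{e_\ell}\mathcal A_{e_r}^\trans\lambda_r$. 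The constraint on $x_k$ then becomes
\[
x_k=\mathcal A_{e_{\ell r}}x_i-\mathcal C_{e_{\ell r}}\lambda_r ,
\]
with $\mathcal A_{e_{\ell r}}$ and $\mathcal C_{e_{\ell r}}$ as in \eqref{eq:composition}. This is exactly the stationarity condition of $\Phi_{e_{\ell r}}$ with multiplier $\lambda_r$. Evaluating the Lagrangian at this point and collecting the $x_i$-quadratic terms gives $\mathcal P_{e_{\ell r}}$; this is the analogue of the rake computation, with $\widetilde{\mathcal P}_{e_r}$ playing the role of $U_j$. This step is the routine but tedious algebra, essentially the associativity computation of \cite{sarkka2023temporal}, and we would follow that derivation.

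The main obstacle is the singular case, where values may be $+\infty$ and the inf-sup exchange needs justification. We would handle it by perturbation. Replace each $\mathcal C$ by $\mathcal C+\epsilon I$, so that every $\Phi$ becomes a finite convex quadratic $\tfrac12\|\mathcal A x_i-x_j\|^2_{(\mathcal C+\epsilon I)^{-1}}$ and the identities are plain Gaussian eliminations. Then let $\epsilon\downarrow0$. The functions $\Phi^\epsilon$ increase monotonically to $\Phi$ (the epigraphical limit), and the right-hand formulas are continuous in $\epsilon$ because $I+CU$ and $S$ stay invertible at $\epsilon=0$. Interchanging the monotone limit with $\inf_{x_j}$ requires an inf-compactness or level-boundedness argument. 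We would try to avoid that by showing directly that the explicit $x_j$ computed above attains the infimum: the $\ge$ direction follows from weak duality ($\inf\sup\ge\sup\inf$, with $\sup\inf$ evaluated at the explicit $\lambda$), and the $\le$ direction from plugging in the explicit $x_j$. The case where the target $x_k$ is infeasible must also be handled; there both sides equal $+\infty$ by a range argument on $\mathcal C_{e_{\ell r}}$.
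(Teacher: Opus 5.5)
Your proposal is correct and follows essentially the paper's route. It uses the same quadratic-form argument for the nonsingularity of $I+\mathcal C_eU_j$ and $S$, the same factored push-through forms for \eqref{eq:elimination-psd}, and the same stationarity systems for both identities, and your explicit weak-duality sandwich makes precise what the paper only asserts for singular $\mathcal C$. The $\epsilon$-perturbation detour is unnecessary, and if you eliminate only $(x_j,\lambda_\ell)$ for fixed $\lambda_r$, as the paper does, the resulting concave quadratic in $\lambda_r$ is exactly the integrand defining $\Phi_{e_{\ell r}}$, so weak duality in $\lambda_r$ gives the $+\infty$ case without your separate range argument.
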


\begin{proof}
First note that $I+\mathcal C_eU_j$ is nonsingular: if
$(I+\mathcal C_eU_j)v=0$, multiplication by $v^\trans U_j$ gives
$v^\trans U_jv+(U_jv)^\trans\mathcal C_e(U_jv)=0$, hence $U_jv=0$ and then $v=0$.
The same argument with $\mathcal C_{e_\ell}$ and
$\mathcal P_{e_r}+U_j$ proves that
$I+\mathcal C_{e_\ell}(\mathcal P_{e_r}+U_j)$ is nonsingular, but not
necessarily symmetric.

For any $C,Z\succeq0$,
\begin{align*}
Z(I+CZ)^{-1}
&=Z^{1/2}(I+Z^{1/2}CZ^{1/2})^{-1}Z^{1/2}\succeq0,\\
(I+CZ)^{-1}C
&=C^{1/2}(I+C^{1/2}ZC^{1/2})^{-1}C^{1/2}\succeq0.
\end{align*}
Taking $(C,Z)=(\mathcal C_e,U_j)$ in the first identity proves
$\tau(\alpha_e,U_j)-\mathcal P_e\succeq0$.  Taking
$(C,Z)=(\mathcal C_{e_\ell},\widetilde{\mathcal P}_{e_r})$ in the first and
second identities proves, respectively,
$\mathcal P_{e_{\ell r}}-\mathcal P_{e_\ell}\succeq0$ and
$\mathcal C_{e_{\ell r}}\succeq0$.  These inequalities also establish the
stated symmetry properties.

For fixed $x_i$, the left-hand side of \cref{eq:fold-identity} is the value of a
finite-dimensional convex--concave quadratic saddle problem in $(x_j,\lambda)$.
Its necessary and sufficient saddle equations are
$\lambda=U_jx_j$ and $x_j=\mathcal A_ex_i-\mathcal C_e\lambda$.  The preceding
nonsingularity gives the unique solution
$(I+\mathcal C_eU_j)x_j=\mathcal A_ex_i$; substituting it into the quadratic gives exactly the
expression in \cref{eq:terminal-fold}.  This argument uses the closed
extended-real-valued function in \cref{eq:path-value} and is therefore valid when
$\mathcal C_e$ is singular.

To prove the identity in \cref{eq:compose-identity}, introduce $\lambda_\ell$
and $\lambda_r$ for the two segment values.  For each fixed
$(x_i,x_k,\lambda_r)$, the saddle equations in the shared variables
$(x_j,\lambda_\ell)$ are
\begin{equation}
\lambda_\ell=\widetilde{\mathcal P}_{e_r}x_j
  +\mathcal A_{e_r}^\trans\lambda_r,
\qquad x_j=\mathcal A_{e_\ell}x_i
  -\mathcal C_{e_\ell}\lambda_\ell.
\label{eq:composition-stationarity}
\end{equation}
The corresponding saddle matrix is nonsingular precisely because
$I+\mathcal C_{e_\ell}\widetilde{\mathcal P}_{e_r}$ is nonsingular.  Eliminating
$(x_j,\lambda_\ell)$ and collecting
the quadratic coefficients of $(x_i,x_k,\lambda_r)$ gives the formulas in
\cref{eq:compress-intermediates,eq:composition}.  In
particular, writing $S=I+\mathcal C_{e_\ell}\widetilde{\mathcal P}_{e_r}$,
the eliminated variables
satisfy
\begin{align*}
\lambda_\ell&=(I+\widetilde{\mathcal P}_{e_r}\mathcal C_{e_\ell})^{-1}
  (\widetilde{\mathcal P}_{e_r}\mathcal A_{e_\ell}x_i
  +\mathcal A_{e_r}^\trans\lambda_r),\\
x_j&=S^{-1}(\mathcal A_{e_\ell}x_i
  -\mathcal C_{e_\ell}\mathcal A_{e_r}^\trans\lambda_r),
\end{align*}
which gives $\mathcal A_{e_{\ell r}}$ and $\mathcal C_{e_{\ell r}}$ directly;
substitution in the remaining quadratic gives $\mathcal P_{e_{\ell r}}$.
Since these equations eliminate a block
of the saddle relation itself, they remain valid for singular
$\mathcal C_{e_\ell}$ and $\mathcal C_{e_r}$, including the exact-constraint
case.  The identities
\begin{align*}
(I+\mathcal C_{e_\ell}\widetilde{\mathcal P}_{e_r})^{-1}\mathcal C_{e_\ell}
  &=\mathcal C_{e_\ell}(I+\widetilde{\mathcal P}_{e_r}\mathcal C_{e_\ell})^{-1},\\
\widetilde{\mathcal P}_{e_r}(I+\mathcal C_{e_\ell}\widetilde{\mathcal P}_{e_r})^{-1}
  &=(I+\widetilde{\mathcal P}_{e_r}\mathcal C_{e_\ell})^{-1}
  \widetilde{\mathcal P}_{e_r}
\end{align*}
are the push-through identity
$(I+AB)^{-1}A=A(I+BA)^{-1}$, applied with
$(A,B)=(\mathcal C_{e_\ell},\widetilde{\mathcal P}_{e_r})$ and
$(\widetilde{\mathcal P}_{e_r},\mathcal C_{e_\ell})$, respectively.
\end{proof}

\begin{proposition}[Contraction invariants]
\label{prop:contraction-well-posed}
Under \cref{def:assumptions}, every intermediate tree produced by contraction
satisfies
\begin{align}
U_i+\sum_{e\in\out(i)}\mathcal P_e&\succeq0
&&\text{for every node }i,\notag\\
\mathcal C_e&\succeq0
&&\text{for every edge }e.
\label{eq:contraction-invariants}
\end{align}
Consequently, if rake is applied to leaf $j$ and its parent edge $e$, then
$U_j\succeq0$ and $I+\mathcal C_eU_j$ is nonsingular.  If compress is applied
at unary node $j$, with outgoing edge $e_r$, then
$\widetilde{\mathcal P}_{e_r}=\mathcal P_{e_r}+U_j\succeq0$ and
$S=I+\mathcal C_{e_\ell}\widetilde{\mathcal P}_{e_r}$ is nonsingular.  Thus
every rake and compress update is well defined.
\end{proposition}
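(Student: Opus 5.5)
Proof plan: induction on the number of rake and compress operations applied so far, treating the invariants in \cref{eq:contraction-invariants} as the induction hypothesis. The well-posedness consequences then follow from the invariants and the nonsingularity argument at the start of the proof of \cref{lem:elimination-identities}.

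\emph{Base case.} Initially $U_i=Q_i$ and $\mathcal P_e=\bar P_e=-M_eR_e^{-1}M_e^\trans$. Hence
\[
U_i+\sum_{e\in\out(i)}\mathcal P_e=Q_i-\sum_{e\in\out(i)}M_eR_e^{-1}M_e^\trans\succeq0
\]
by \cref{eq:local-convexity}. Also $\bar C_e=\Delta_j+B_eR_e^{-1}B_e^\trans\succeq0$, because $\Delta_j\succeq0$ and $R_e\succ0$.

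\emph{Consequences from the invariants.} Suppose the invariants hold before an operation.
\begin{itemize}
\item If $j$ is a leaf to be raked, then $\out(j)=\emptyset$, so the node invariant reads $U_j\succeq0$.
\item If $j$ is unary with outgoing edge $e_r$, the node invariant reads $U_j+\mathcal P_{e_r}=\widetilde{\mathcal P}_{e_r}\succeq0$.
\end{itemize}
Together with $\mathcal C_e\succeq0$, the argument in \cref{lem:elimination-identities} shows that $I+\mathcal C_eU_j$ and $S$ are nonsingular. So that lemma applies to the operation.

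\emph{Inductive step.} Only the nodes and edges touched by the operation change, so only those need checking.

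\emph{Rake.} Raking leaf $j$ with parent edge $e=(i,j)$ removes $e$ from $\out(i)$ and replaces $U_i$ by $U_i+\tau(\alpha_e,U_j)$. The node sum at $i$ becomes
\[
U_i+\sum_{e'\in\out(i)\setminus\{e\}}\mathcal P_{e'}+\tau(\alpha_e,U_j)
=\Bigl(U_i+\sum_{e'\in\out(i)}\mathcal P_{e'}\Bigr)+\bigl(\tau(\alpha_e,U_j)-\mathcal P_e\bigr).
\]
Both terms are positive semidefinite, the second by \cref{eq:elimination-psd}. No other edge's $\mathcal C$ changes.

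\emph{Compress.} Compressing $j$ with $e_\ell=(i,j)$ and $e_r=(j,k)$ replaces $e_\ell$ in $\out(i)$ by $e_{\ell r}$, leaves $U_i$ unchanged, and removes node $j$. The node sum at $i$ changes by $\mathcal P_{e_{\ell r}}-\mathcal P_{e_\ell}\succeq0$, and $\mathcal C_{e_{\ell r}}\succeq0$; both follow from \cref{eq:elimination-psd}. Node $k$ and all other nodes are unaffected, since their outgoing edges and $U$ data are unchanged.

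\emph{Concurrency.} Operations within a round are applied concurrently: rakes of several siblings all add to one parent, and nonadjacent compressions may share an endpoint. I will argue that each increment depends only on data unaffected by the other simultaneous operations. Its PSD contribution is then the same as under some sequential order, so the induction may be carried out sequentially without loss of generality. I expect this to be the main bookkeeping point: confirming that nonadjacent compression centers, and rakes performed before the round's compressions, never read modified data.

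Symmetry of all $U_i$, $\mathcal P_e$ and $\mathcal C_e$ is also needed to invoke the lemma at each step. It is carried along in the induction using the symmetry statement of \cref{lem:elimination-identities}.
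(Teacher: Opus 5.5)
Your proposal is correct and follows essentially the same route as the paper: induction on contraction steps from the base case given by \eqref{eq:local-convexity}, reading $U_j\succeq0$ and $\mathcal P_{e_r}+U_j\succeq0$ off the node invariant at leaf and unary nodes, and showing that the parent's node sum changes only by the positive-semidefinite increments $\tau(\alpha_e,U_j)-\mathcal P_e$ or $\mathcal P_{e_{\ell r}}-\mathcal P_{e_\ell}$ from \cref{lem:elimination-identities}. Your explicit serialization of concurrent operations and your symmetry bookkeeping are slightly more careful than the paper, which handles simultaneity with a single remark that sibling rakes add one such increment per removed edge.
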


\begin{proof}
Initially, $U_i=Q_i$ and
$\mathcal P_e=-M_eR_e^{-1}M_e^\trans$, so the node invariant is exactly
\cref{eq:local-convexity}; $\mathcal C_e\succeq0$ follows from
\cref{eq:edge-components}.

Suppose rake removes leaf $j$ and edge $e=(i,j)$.  Because $j$ is a leaf, its
node invariant gives $U_j\succeq0$, so
\cref{lem:elimination-identities} applies.  Use a
superscript $+$ for data and outgoing-edge sets after the update.  Then
\begin{align*}
U_i^++\sum_{f\in\out^+(i)}\mathcal P_f
&=U_i+\sum_{f\in\out(i)}\mathcal P_f
  +\tau(\alpha_e,U_j)-\mathcal P_e\\
&\succeq0
\end{align*}
by \cref{eq:elimination-psd}.  The invariants at all other nodes and edges are
unchanged.  Simultaneous rakes at one parent add one positive-semidefinite
increment of this form for each removed edge.

Now suppose compress removes unary node $j$, with incoming edge $e_\ell$ from
parent $i$ and outgoing edge $e_r$.  The node invariant at $j$ gives
$U_j+\mathcal P_{e_r}\succeq0$, so
\cref{lem:elimination-identities} again applies.  The
replacement edge is $e_{\ell r}$, and
\begin{align*}
U_i^++\sum_{f\in\out^+(i)}\mathcal P_f
&=U_i+\sum_{f\in\out(i)}\mathcal P_f
  +\mathcal P_{e_{\ell r}}-\mathcal P_{e_\ell}\\
&\succeq0
\end{align*}
by \cref{eq:elimination-psd}; the other node invariants are unchanged.  The same
lemma gives $\mathcal C_{e_{\ell r}}\succeq0$, while all other edge data are
unchanged.  Induction proves \cref{eq:contraction-invariants} for every
intermediate tree.

The nonsingularity conclusions follow from the same leaf and unary cases of
\cref{eq:contraction-invariants}, together with
\cref{lem:elimination-identities}.
\end{proof}

By \cref{lem:elimination-identities,prop:contraction-well-posed}, compress composition is
associative in every intermediate tree.  For three consecutive edges with
node data $U_1,U_2$ at their two junctions,
\begin{equation}
\alpha_{e_3}\circ_{U_2}(\alpha_{e_2}\circ_{U_1}\alpha_{e_1})
=(\alpha_{e_3}\circ_{U_2}\alpha_{e_2})\circ_{U_1}\alpha_{e_1}.
\label{eq:node-aware-associativity}
\end{equation}
Both sides eliminate the same two internal states from the same
extended-real-valued conditional quadratic function.  Thus any
parenthesization of a unary chain segment, with each node's data included at
its junction, represents the same conditional problem.

\subsection{Value reconstruction algebra}

Let $\T'=(\V',\E',\rho)$ be any intermediate tree, with its current node and
edge data.  Associate with $\T'$ the control-condensed homogeneous saddle
function below, where $\lambda_e$ is the auxiliary multiplier associated with
the current edge $e$:
\begin{align}
\mathcal L_{\T'}(x,\lambda)
={}&\frac12\sum_{i\in\V'}x_i^\trans U_ix_i
  +\frac12\sum_{e=(i,j)\in\E'}x_i^\trans\mathcal P_ex_i\notag\\
&+\sum_{e=(i,j)\in\E'}
  \left\{\lambda_e^\trans(\mathcal A_ex_i-x_j)
  -\frac12\lambda_e^\trans\mathcal C_e\lambda_e\right\},
\label{eq:intermediate-saddle}
\end{align}
and its state-only value function
\begin{align}
V_{\T'}^0(x_{\V'})
&=\sup_\lambda\mathcal L_{\T'}(x,\lambda)\notag\\
&=\frac12\sum_{i\in\V'}x_i^\trans U_ix_i
 +\sum_{e=(i,j)\in\E'}\Phi_e(x_i,x_j).
\label{eq:intermediate-value}
\end{align}
For the initial tree $\T_0=\T$, this is the state function obtained from the
homogeneous version of \cref{eq:saddle} by minimizing over every control and
maximizing over every nonroot multiplier, while treating the node states as
arguments.  As with every subtree value, $V_{\T_0}^0$ excludes the
constraint entering its root.  For the full tree, this is the initial-state
saddle term involving $y_\rho$, which is incorporated when the root KKT
equations are solved.

\begin{proposition}[Partial-elimination invariant]
\label{prop:partial-elimination}
For every intermediate tree $\T'$,
\begin{equation}
V_{\T'}^0(x_{\V'})
=\inf_{x_{\V\setminus\V'}}V_{\T_0}^0(x_{\V}).
\label{eq:contraction-value-invariant}
\end{equation}
The same identity holds when $\T_0$ is any original subtree and $\T'$ is any
intermediate tree obtained by contractions within that subtree.
\end{proposition}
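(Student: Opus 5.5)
The plan is to argue by induction on the number of elementary operations (single rakes and single compressions) that produce $\T'$ from $\T_0$. Simultaneous operations within a structural round touch disjoint eliminated nodes, so I will first serialize them in any order. The base case $\T'=\T_0$ is trivial, since the infimum is then over the empty set. For the inductive step, suppose $\T''$ is obtained from $\T'$ by one operation eliminating node $j$. Because infima over disjoint blocks of variables can be taken iteratively,
\[
\inf_{x_{\V\setminus\V''}}V_{\T_0}^0
=\inf_{x_j}\inf_{x_{\V\setminus\V'}}V_{\T_0}^0
=\inf_{x_j}V_{\T'}^0(x_{\V'}).
\]
It therefore suffices to show $\inf_{x_j}V_{\T'}^0=V_{\T''}^0$ as extended-real-valued functions of $x_{\V''}$.

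For this one-step identity I will use the decomposition \cref{eq:intermediate-value}. Only a few terms of $V_{\T'}^0$ depend on $x_j$. In a rake these are $\tfrac12x_j^\trans U_jx_j$ and $\Phi_e(x_i,x_j)$; since $j$ is a leaf, no outgoing edge term involves $x_j$. In a compress they are $\tfrac12x_j^\trans U_jx_j$, $\Phi_{e_\ell}(x_i,x_j)$, and $\Phi_{e_r}(x_j,x_k)$. Every other term is independent of $x_j$ and can be pulled out of the infimum.

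In the rake case, \cref{eq:fold-identity} replaces the $x_j$-dependent group by $\tfrac12x_i^\trans\tau(\alpha_e,U_j)x_i$, which is precisely the increment to $U_i$. In the compress case I apply \cref{eq:compose-identity}, but first account for the placement of $\mathcal P_{e_r}$. In \cref{eq:path-value} the quadratic $\tfrac12x_j^\trans\mathcal P_{e_r}x_j$ already sits inside $\Phi_{e_r}(x_j,x_k)$, so the grouped terms are exactly the left side of \cref{eq:compose-identity}. The intermediate $\widetilde{\mathcal P}_{e_r}$ arises only inside the lemma's elimination. The result is $\Phi_{e_{\ell r}}(x_i,x_k)$, which is the new edge term of $\T''$. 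The hypotheses of \cref{lem:elimination-identities}, namely $U_j\succeq0$ for a rake and $\widetilde{\mathcal P}_{e_r}\succeq0$ together with $\mathcal C\succeq0$ for a compress, are supplied at every intermediate tree by \cref{prop:contraction-well-posed}.

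The main obstacle is the extended-real bookkeeping. Pulling $x_j$-independent terms out of an infimum is only valid when no expression of the form $+\infty-\infty$ can occur. Every term here is either a finite quadratic or a function with values in $(-\infty,+\infty]$; this holds for $\Phi_e$ because $\mathcal C_e\succeq0$ makes the supremum in \cref{eq:path-value} either finite or $+\infty$. Pulling them out is therefore legitimate, including the case where the infimum over $x_j$ itself equals $+\infty$. I also need infeasibility to be preserved correctly: $\inf_{x_j}$ should be $+\infty$ exactly when the new edge constraint fails, and this is part of the extended-valued statement of \cref{lem:elimination-identities}. Finally, the subtree version follows verbatim, because the argument never uses the root constraint or anything outside the contracted subtree.
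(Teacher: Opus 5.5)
Your proposal is correct and follows essentially the same route as the paper. Both argue by induction over contraction steps: rakes are preserved by \cref{eq:fold-identity}, compressions by \cref{eq:compose-identity}, and the subtree version follows by the same induction.

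The differences are refinements rather than a different argument. The paper handles a whole round by noting that simultaneous eliminations are separable conditional on the surviving states; you instead serialize them. Serialization is valid provided each round's rakes precede its compressions, since a compression center may become unary, and have its $U_j$ updated, only after those rakes. You also make explicit two points the paper leaves implicit: the extended-real bookkeeping, and the appeal to \cref{prop:contraction-well-posed} for the lemma's hypotheses.
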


\begin{proof}
The identity holds trivially for $\T_0$, since
$\V\setminus\V_0=\varnothing$.  A rake preserves it by
\cref{eq:fold-identity}, and a compression preserves it by
\cref{eq:compose-identity}.  Simultaneous operations eliminate distinct state
variables whose contributions are separable conditional on the surviving
states; contributions from sibling rakes add at their common parent.  The same
argument therefore applies to an entire contraction step.  Induction over the
contraction steps proves the claim.  Starting from the initial data on any
original subtree gives
the stated subtree identity by the same induction.
\end{proof}

For an original node $i$, let $\T_i=(\V_i,\E_i,i)$ be its original subtree,
and let $V_{\T_i}^0$ be the state-only value function in
\cref{eq:intermediate-value} formed on $\T_i$ from the initial node and edge
data.  Define
\begin{equation}
V_i^0(x_i)=\inf_{x_{\V_i\setminus\{i\}}}
V_{\T_i}^0(x_{\V_i}).
\label{eq:homogeneous-subtree-value-definition}
\end{equation}
Thus $V_i^0$ is the homogeneous subtree value with $x_i$ fixed; in particular,
$V_\rho^0(x_\rho)=\inf_{x_{\V\setminus\{\rho\}}}
V_{\T_0}^0(x_\V)$.

The elimination identities also define the reverse reconstruction.  At
termination, set $P_\rho$ equal to the root data.  If rake removed leaf $j$,
set $P_j$ to the value of $U_j$ when $j$ was removed.  If compress removed $j$
between $e_\ell=(i,j)$ and $e_r=(j,k)$, then, once $P_k$ has been recovered,
set
\begin{equation}
P_j=\tau\bigl((\mathcal A_{e_r},\mathcal C_{e_r},
                 \mathcal P_{e_r}+U_j),P_k\bigr),
\label{eq:value-reconstruction}
\end{equation}
where the quantities on the right are their values when $j$ was removed.

\begin{theorem}[Value reconstruction correctness]
\label{thm:quadratic-correctness}
Under \cref{def:assumptions}, contraction and the reverse reconstruction above
are well posed.  The reconstruction returns $P_i\succeq0$ for every original
node, with
\begin{equation}
V_i^0(x_i)=\tfrac12x_i^\trans P_ix_i.
\label{eq:homogeneous-subtree-value}
\end{equation}
\end{theorem}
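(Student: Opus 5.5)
Well-posedness of the forward contraction is already given by \cref{prop:contraction-well-posed}. It remains to show that each reverse step is well defined and yields the stated value. The plan is a reverse induction over the recorded operations. At each step we establish two things: first, that $P_j\succeq0$; second, that
\[
\tfrac12x_j^\trans P_jx_j
=\inf_{x_{\V_j\setminus\{j\}}}V_{\T_j}^0(x_{\V_j}).
\]
The main tool is the subtree version of \cref{prop:partial-elimination}.

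\emph{Node alive at the end, or removed by a rake.} Fix an original node $j$ and consider the moment it is removed; for the root, take termination. At that moment $j$ has no children in the current tree. Every original descendant of $j$ has therefore already been eliminated, and every elimination affecting descendant data took place inside $\T_j$. The current data $U_j$ (together with the absent outgoing edges) thus coincide with the data obtained by contracting $\T_j$ alone down to its root. Here I will need a short locality argument: operations outside $\T_j$ never modify data attached to nodes or edges of $\T_j$. This holds because a rake modifies only its parent, and a compress modifies only the new edge, whose endpoints lie on one side. Applying the subtree statement of \cref{prop:partial-elimination} to $\T_j$ then gives $\tfrac12x_j^\trans U_jx_j=V_j^0(x_j)$. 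Moreover $U_j\succeq0$ by the leaf case of \cref{eq:contraction-invariants}. This settles $\rho$ and all raked nodes.

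\emph{Node removed by a compress.} Let $j$ be compressed between $e_\ell=(i,j)$ and $e_r=(j,k)$, with $k$ processed earlier in the reverse order. At removal time, the current subtree rooted at $j$ consists of $j$, the edge $e_r$, and the current subtree rooted at $k$. By the same locality argument, the current data restricted to the original subtree $\T_j$ are an intermediate contraction of $\T_j$ with node set containing $\{j\}$, $k$, and the surviving descendants of $k$. Applying \cref{prop:partial-elimination} to $\T_j$ and minimizing first over everything below $k$ reduces the value to
\[
\inf_{x_k}\bigl\{\tfrac12x_j^\trans(U_j+\mathcal P_{e_r})x_j+\Phi_{e_r}(x_j,x_k)+V_k^0(x_k)\bigr\}.
\]
This step uses the induction hypothesis $V_k^0=\tfrac12x_k^\trans P_kx_k$, which is legitimate because the later contraction of $k$'s subtree is confined to $\T_k$ and \cref{prop:partial-elimination} applies there. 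Rewrite the expression as $\inf_{x_k}\{\Phi_{e'}(x_j,x_k)+\tfrac12x_k^\trans P_kx_k\}$, where $e'$ is the edge triple $(\mathcal A_{e_r},\mathcal C_{e_r},\mathcal P_{e_r}+U_j)$. The rake identity \cref{eq:fold-identity} of \cref{lem:elimination-identities} then yields exactly \cref{eq:value-reconstruction}. Its hypotheses hold: $\mathcal C_{e_r}\succeq0$, $P_k\succeq0$ by induction, and symmetry holds. The PSD part of \cref{eq:elimination-psd} gives $P_j\succeq\mathcal P_{e_r}+U_j\succeq0$, using the unary case of \cref{eq:contraction-invariants}. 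Well-definedness of $(I+\mathcal C_{e_r}P_k)^{-1}$ follows from the same lemma.

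\emph{Main obstacle.} The delicate point is the bookkeeping claim that the data held at removal time are exactly a contraction of the original subtree $\T_j$ (or of $\T_j$ with $\T_k$ partially contracted), unaffected by operations elsewhere. I would prove it as a separate structural invariant: every current edge $(a,b)$ and every current $U_a$ depends only on the original data inside $\T_a$. This is checked once for rake and once for compress, and then feeds both cases above. Since a compressed node's reconstruction needs $P_k$ first, the reverse order makes the induction well-founded.
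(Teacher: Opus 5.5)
Your proposal is correct and follows the paper's proof essentially step for step. The paper also uses reverse induction; it applies the subtree form of \cref{prop:partial-elimination} for the root and raked nodes, applies \cref{eq:fold-identity} to $(\mathcal A_{e_r},\mathcal C_{e_r},\mathcal P_{e_r}+U_j)$ for compressed nodes, and obtains positive semidefiniteness from \cref{eq:contraction-invariants}. The locality invariant you isolate is what the paper compresses into the phrase ``induced contraction of the original subtree.''

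There is one small slip in the compress case. Since $\Phi_{e_r}(x_j,x_k)$ already contains $\tfrac12x_j^\trans\mathcal P_{e_r}x_j$, your intermediate display double-counts $\mathcal P_{e_r}$. It should read $\inf_{x_k}\{\tfrac12x_j^\trans U_jx_j+\Phi_{e_r}(x_j,x_k)+V_k^0(x_k)\}$. Your subsequent rewrite in terms of the modified triple and the resulting formula for $P_j$ are right.
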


\begin{proof}
At termination, \cref{eq:contraction-value-invariant} reduces to
$\frac12x_\rho^\trans U_\rho x_\rho=V_\rho^0(x_\rho)$, so
$P_\rho=U_\rho$ satisfies \cref{eq:homogeneous-subtree-value}.

Proceed in reverse elimination order.  If $j$ was raked, it was a leaf of the
intermediate tree when removed.  All states strictly below $j$ in the original
tree had therefore already been eliminated.  Applying
\cref{prop:partial-elimination} to the induced contraction of the original
subtree rooted at $j$ gives
$V_j^0(x_j)=\frac12x_j^\trans U_jx_j$.  Hence restoring $P_j=U_j$ recovers the
required value.

Suppose instead that $j$ was compressed between $e_\ell=(i,j)$ and
$e_r=(j,k)$.  At removal, $e_r$ was the only outgoing edge of $j$.  Once
reverse reconstruction has recovered $P_k$, applying
\cref{prop:partial-elimination} below $j$ gives
\begin{equation}
V_j^0(x_j)=\inf_{x_k}\left\{
\tfrac12x_j^\trans U_jx_j+\Phi_{e_r}(x_j,x_k)
+\tfrac12x_k^\trans P_kx_k\right\},
\label{eq:compressed-subtree-reconstruction}
\end{equation}
where the node and edge data are their values when $j$ was removed.  Applying
\cref{eq:fold-identity} to \cref{eq:compressed-subtree-reconstruction} gives
\cref{eq:value-reconstruction} and
$V_j^0(x_j)=\frac12x_j^\trans P_jx_j$.  Reverse induction establishes
\cref{eq:homogeneous-subtree-value} for every node.

The root data are nonnegative by \cref{eq:contraction-invariants}.  A reversed
rake restores the nonnegative node data present at removal.  For a reversed
compression, the invariants at removal give
$\mathcal P_{e_r}+U_j\succeq0$ and $\mathcal C_{e_r}\succeq0$.
Given the already recovered $P_k\succeq0$,
\cref{lem:elimination-identities} shows that
\cref{eq:value-reconstruction} is well posed and returns
$P_j\succeq0$.  Reverse induction therefore gives $P_i\succeq0$ for every
node.
\end{proof}

\subsection{Factorization dataflow}

Using the node and edge initialization above, the executor applies the
following numerical callbacks.

\operation{Rake a leaf $j$ and its parent edge $(i,j)$.}
Let $e=(i,j)$.  Return the rake contribution and saved data
\begin{equation}
m_{i\leftarrow j}=\tau(\alpha_e,U_j),\qquad s_j^{\rm rake}=U_j.
\label{eq:quadratic-rake}
\end{equation}
The contribution is a symmetric matrix.  Rake contributions from siblings are
added by a balanced reduction, and their sum is added to $U_i$.  The saved data
$s_j^{\rm rake}$ are written to the expansion tape before $j$ is removed.

\operation{Compress a unary node $j$.}
Let $e_\ell$ and $e_r$ be the incoming and outgoing edges of $j$, respectively.
Return the replacement edge data and saved data
\begin{equation}
\begin{aligned}
\alpha_{e_{\ell r}}&=\alpha_{e_r}\circ_{U_j}\alpha_{e_\ell},\\
s_j^{\rm comp}&=(\mathcal A_{e_r},\mathcal C_{e_r},
\widetilde{\mathcal P}_{e_r}).
\end{aligned}
\label{eq:quadratic-compress}
\end{equation}
$s_j^{\rm comp}$ is written to the tape before the two incident edges and
$j$ are replaced by $e_{\ell r}$ and its data $\alpha_{e_{\ell r}}$.

\operation{Expand a compression at $j$.}
Once reverse expansion has recovered $P_k$ at the child endpoint, read the
saved data and set
\begin{equation}
P_j=\tau(s_j^{\rm comp},P_k).
\label{eq:quadratic-expand-compress}
\end{equation}

\operation{Expand a rake at $j$.}
Set $P_j=s_j^{\rm rake}$.

\subsection{Local factor construction}

After contraction, denote the remaining root data by $P_\rho$.  Reverse
expansion defines $P_i$ for every other original node.  The tape is temporary:
once expansion finishes, it may be discarded.  By
\cref{thm:quadratic-correctness}, these matrices are the positive-semidefinite
Hessians of the homogeneous subtree value functions.  The remaining
factorization work is node- or edge-local.  For every node, define
\begin{equation}
F_i=I+\Delta_iP_i,\qquad
W_i=P_iF_i^{-1}=(I+P_i\Delta_i)^{-1}P_i.
\label{eq:FW}
\end{equation}
For each original edge $e=(i,j)$, define
\begin{equation}
\begin{aligned}
G_e&=R_e+B_e^\trans W_jB_e,
&H_e&=B_e^\trans W_jA_e+M_e^\trans,\\
K_e&=-G_e^{-1}H_e,
&A_e^{\rm cl}&=A_e+B_eK_e,\\
T_e&=F_j^{-1}A_e^{\rm cl}.
\end{aligned}
\label{eq:factor-local}
\end{equation}
These local systems are well posed.  Indeed, if
$(I+\Delta_iP_i)v=0$, multiplication by $v^\trans P_i$ gives
$v^\trans P_iv+(P_iv)^\trans\Delta_i(P_iv)=0$, hence $P_iv=0$ and then
$v=0$.  Thus $F_i$ is nonsingular, and the push-through identity gives
\[
W_i=P_i^{1/2}
(I+P_i^{1/2}\Delta_iP_i^{1/2})^{-1}P_i^{1/2}\succeq0.
\]
It follows that $G_e=R_e+B_e^\trans W_jB_e\succ0$.

The reusable factorization consists of $(P_i,W_i)$ and a solver for $F_i$ at
each node, together with $(K_e,A_e^{\rm cl},T_e)$ and a factorization of
$G_e$ at each edge.  It contains no right-hand-side data.

\section{Rake--compress solve}
\label{sec:affine}

For fixed $(Q,M,R,A,B,\Delta)$, the KKT matrix and the factorization produced
in \cref{sec:quadratic-algebra} are unchanged when $(q,r,c)$ changes.  Each new
right-hand side therefore requires only vector-valued operations; no quadratic
coefficients are recomputed or refactored.

This section first derives a recursion for the linear coefficients of the
subtree value functions.  It then evaluates that recursion by contraction and
reverse expansion, solves for the root state, and recovers the remaining
states, controls, and multipliers.  The final subsection states the associated
executor dataflow.

\subsection{Affine subtree recursion}

The solve uses separate affine data rather than appending vectors to the
quadratic edge triples.  For arbitrary $(q,r,c)$, let $V_i(x_i)$ be the value
of the nonhomogeneous problem restricted to the original subtree rooted at
$i$, with $x_i$ fixed and the constraint entering $i$ omitted.  Write
\[
V_i(x_i)=\tfrac12x_i^\trans P_ix_i+p_i^\trans x_i+\kappa_i,
\]
where $P_i$ is provided by the factorization and $p_i$ is the linear
coefficient to be computed.  For each original edge $e=(i,j)$, form
\begin{equation}
Z_e=T_e^\trans,\qquad
z_e=K_e^\trans r_e+(A_e^{\rm cl})^\trans W_jc_j.
\label{eq:affine-edge}
\end{equation}
For later use, define
\begin{equation}
\begin{aligned}
f_i&=\Delta_ip_i-c_i,\\
g_i&=p_i-W_if_i,\\
k_e&=-G_e^{-1}(r_e+B_e^\trans g_j),\\
b_e&=F_j^{-1}(B_ek_e-f_j),\qquad e=(i,j).
\end{aligned}
\label{eq:local-affine-solve}
\end{equation}

\begin{lemma}[Affine subtree recursion]
\label{lem:affine-recursion}
Under \cref{def:assumptions}, fix the factorization produced in
\cref{sec:quadratic-algebra} and arbitrary affine data $(q,r,c)$.  The linear
coefficients of the nonhomogeneous subtree values satisfy
\begin{equation}
p_i=q_i+\sum_{e=(i,j)\in\out(i)}(Z_ep_j+z_e).
\label{eq:affine-message}
\end{equation}
\end{lemma}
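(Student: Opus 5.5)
We prove \cref{eq:affine-message} by a one-step dynamic-programming argument on each edge, with $x_i$ fixed. Since the subtrees $\T_j$, $j\in\C(i)$, are disjoint, the nonhomogeneous subtree value decomposes as
\[
V_i(x_i)=\tfrac12x_i^\trans Q_ix_i+q_i^\trans x_i+\sum_{e=(i,j)\in\out(i)}J_e(x_i),
\]
where $J_e(x_i)$ is the saddle value over $(u_e,y_j,x_j)$ of the edge term, the regularized constraint entering $j$, and $V_j(x_j)$. Here $V_j$ itself omits the constraint entering $j$, exactly as stated. The quadratic part of $J_e$ is fixed by the factorization (\cref{thm:quadratic-correctness}), so only the linear coefficient needs to be computed. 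Differentiating at $x_i=0$ gives $p_i=q_i+\sum_e\nabla J_e(0)$, and the whole task reduces to showing $\nabla J_e(0)=Z_ep_j+z_e$.

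First, fix $u_e$ and eliminate $(x_j,y_j)$ from
\[
\tfrac12x_j^\trans P_jx_j+p_j^\trans x_j+y_j^\trans(A_ex_i+B_eu_e+c_j-x_j)-\tfrac12y_j^\trans\Delta_jy_j .
\]
The stationarity equations are $y_j=P_jx_j+p_j$ and $x_j=w-\Delta_jy_j$, where $w=A_ex_i+B_eu_e+c_j$. They give $F_jx_j=w-\Delta_jp_j$, which is uniquely solvable because $F_j$ is nonsingular. Substituting back, the resulting function of $w$ should be $\tfrac12w^\trans W_jw+g_j^\trans w+\text{const}$, with $g_j=p_j-W_jf_j$. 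The vector $g_j$ is obtained by expanding with $F_j^{-1}=I-W_j\Delta_j$ and simplifying with the push-through identity $(I+\Delta_jP_j)^{-1}\Delta_j=\Delta_j(I+P_j\Delta_j)^{-1}$. This handles singular $\Delta_j$ without inverting it, consistent with \cref{lem:elimination-identities}.

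Next, minimize over $u_e$ the expression
\[
x_i^\trans M_eu_e+\tfrac12u_e^\trans R_eu_e+r_e^\trans u_e+\tfrac12w^\trans W_jw+g_j^\trans w .
\]
Stationarity gives $G_eu_e=-H_ex_i-(r_e+B_e^\trans(W_jc_j+g_j))$, hence $u_e=K_ex_i+k_e'$ for an explicit offset $k_e'$, where $G_e\succ0$ ensures solvability. The envelope theorem then gives
\[
\nabla J_e(0)=M_ek_e'+A_e^\trans\bigl(W_j(B_ek_e'+c_j)+g_j\bigr).
\]
The main obstacle is the bookkeeping that rewrites this gradient in the form $Z_ep_j+z_e$, where $Z_e=T_e^\trans=(A_e^{\rm cl})^\trans F_j^{-\trans}$ and the $c_j$ and $r_e$ contributions must collect into $z_e$.

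For the rewriting I would use $G_eK_e=-H_e$, which gives $M_e+A_e^\trans W_jB_e=-K_e^\trans G_e$. Applying this to the $k_e'$ terms collapses them into $K_e^\trans(r_e+B_e^\trans(W_jc_j+g_j))$. Collecting the remaining terms yields
\[
K_e^\trans r_e+(A_e^{\rm cl})^\trans(W_jc_j+g_j),
\]
which must then be checked against the stated form. Substituting $g_j=p_j-W_j\Delta_jp_j+W_jc_j$ produces a doubled $W_jc_j$ term, which indicates a mistake in my completion. The error should be in the $w$-value: the cleaner route is to complete the square directly in terms of $\tilde w=w-\Delta_jp_j$. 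Done that way, the linear coefficient becomes $(I-W_j\Delta_j)p_j=F_j^{-\trans}p_j$ (using symmetry of $P_j,\Delta_j$), with $c_j$ entering only through $w$. The gradient is then
\[
K_e^\trans r_e+(A_e^{\rm cl})^\trans\bigl(W_jc_j+F_j^{-\trans}p_j\bigr)=z_e+Z_ep_j,
\]
which is exactly the claimed recursion. I expect this sign and transpose reconciliation of $F_j^{-1}$ versus $F_j^{-\trans}$ to be the one delicate step. I would verify it through the identity $F_j^{-\trans}=I-\Delta_jW_j$, whose transpose is $F_j^{-1}=I-W_j\Delta_j$.
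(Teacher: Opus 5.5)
Your corrected argument is right and is essentially the paper's proof. Both substitute $y_j=P_jx_j+p_j$ into the child dynamics to get $F_jx_j=A_ex_i+B_eu_e-f_j$, solve $G_eu_e+H_ex_i+r_e+B_e^\trans g_j=0$, collapse the offset terms with $M_e+A_e^\trans W_jB_e=-K_e^\trans G_e$, and finish with $g_j=F_j^{-\trans}p_j+W_jc_j$; the only difference is that you extract $p_i$ as the envelope gradient $\nabla V_i(0)$, whereas the paper substitutes into \cref{eq:kkt-x} and cancels the $x_i$-coefficient using the homogeneous Riccati identity for $P_i$.

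One slip to fix: the identities in your last sentence, and the earlier $F_j^{-1}=I-W_j\Delta_j$, are transposed. In fact $F_j^{-1}=(I+\Delta_jP_j)^{-1}=I-\Delta_jW_j$ and $F_j^{-\trans}=(I+P_j\Delta_j)^{-1}=I-W_j\Delta_j$, and these differ unless $P_j$ and $\Delta_j$ commute. The step $(I-W_j\Delta_j)p_j=F_j^{-\trans}p_j$ that you actually use is correct, but the check you propose for it would fail as written.
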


\begin{proof}
After the variables strictly below $j$ have been eliminated, the terms in the
parent problem that depend on $x_j$ are
$V_j(x_j)-y_j^\trans x_j$.  Stationarity with respect to $x_j$ therefore gives
\[
0=\nabla_{x_j}\bigl(V_j(x_j)-y_j^\trans x_j\bigr)
  =P_jx_j+p_j-y_j,
\]
and hence $y_j=P_jx_j+p_j$.
Now fix $e=(i,j)$.  Substituting this relation into the child dynamics
\cref{eq:kkt-dyn} gives
\[
(I+\Delta_jP_j)x_j
=A_ex_i+B_eu_e-(\Delta_jp_j-c_j).
\]
Using \cref{eq:FW,eq:local-affine-solve}, solve for $x_j$ and substitute the
result into $y_j=P_jx_j+p_j$ to obtain
\[
\begin{aligned}
x_j&=F_j^{-1}(A_ex_i+B_eu_e-f_j),\\
y_j&=W_j(A_ex_i+B_eu_e)+g_j.
\end{aligned}
\]
Substituting these expressions into \cref{eq:kkt-u} and using
\cref{eq:factor-local} yields
$G_eu_e+H_ex_i+r_e+B_e^\trans g_j=0$.  Equations
\eqref{eq:factor-local} and \eqref{eq:local-affine-solve} then give
\begin{equation}
\begin{aligned}
u_e&=K_ex_i+k_e,\\
y_j&=W_jA_e^{\rm cl}x_i+W_jB_ek_e+g_j,
\end{aligned}
\label{eq:eliminated-edge-affine}
\end{equation}
where the second identity follows by substituting the first into the preceding
expression for $y_j$.  Hence the part of
$M_eu_e+A_e^\trans y_j$ independent of $x_i$ is
\[
d_e=M_ek_e+A_e^\trans(W_jB_ek_e+g_j).
\]
Since $G_e$ is symmetric, transposing $G_eK_e=-H_e$ from
\cref{eq:factor-local} yields
$M_e+A_e^\trans W_jB_e=-K_e^\trans G_e$.  Together with
\cref{eq:local-affine-solve}, this gives
\[
d_e=K_e^\trans r_e+(A_e^{\rm cl})^\trans g_j.
\]
The push-through identity and \cref{eq:FW} give
\[
g_j=(I-W_j\Delta_j)p_j+W_jc_j
    =F_j^{-\trans}p_j+W_jc_j.
\]
Using \cref{eq:factor-local,eq:affine-edge}, it follows that
$d_e=T_e^\trans p_j+z_e=Z_ep_j+z_e$.  In the homogeneous case,
\cref{thm:quadratic-correctness} gives $y_i=P_ix_i$, while
\cref{eq:eliminated-edge-affine} reduces to
$u_e=K_ex_i$ and $y_j=W_jA_e^{\rm cl}x_i$.  Substitution into
\cref{eq:kkt-x} therefore gives
\[
P_i=Q_i+\sum_{e=(i,j)\in\out(i)}
\left(M_eK_e+A_e^\trans W_jA_e^{\rm cl}\right).
\]
For arbitrary affine data, substituting $y_i=P_ix_i+p_i$ and
\cref{eq:eliminated-edge-affine} into \cref{eq:kkt-x}, and using this identity
to cancel the coefficient of $x_i$, leaves
$p_i=q_i+\sum_{e=(i,j)\in\out(i)}d_e$.  This is
\cref{eq:affine-message}.
\end{proof}

\subsection{Affine contraction and expansion}

Associate with every intermediate tree $\T'$ node vectors $h_i$ and affine
edge maps $\zeta_e(v)=Z_ev+z_e$.  These data define vectors
$\pi_i^{\T'}$ recursively by
\begin{equation}
\pi_i^{\T'}=h_i+
\sum_{e=(i,j)\in\out(i)}\zeta_e(\pi_j^{\T'}).
\label{eq:intermediate-affine-recursion}
\end{equation}
Initially, $h_i=q_i$ and the edge maps are given by
\cref{eq:affine-edge}, so \cref{lem:affine-recursion} gives
$\pi_i^{\T_0}=p_i$.

A rake of leaf $j$ through the edge map $\zeta_e(v)=Zv+z$ adds
\begin{equation}
m_{i\leftarrow j}=Zh_j+z
\label{eq:affine-rake}
\end{equation}
to its parent data.  For a compression, let $(Z_\ell,z_\ell)$ be the
parent-to-middle map, $(Z_r,z_r)$ the middle-to-child map, and $h_j$ the
middle-node data.  Substitution gives the replacement map
\begin{equation}
\widehat\zeta(v)=Z_\ell Z_rv+z_\ell+Z_\ell(h_j+z_r).
\label{eq:affine-compose}
\end{equation}
The inverse relations are $p_j=h_j$ for a rake and
$p_j=Z_rp_k+h_j+z_r$ for a compression once $p_k$ is known.

\begin{proposition}[Affine evaluation invariant]
\label{prop:affine-evaluation}
Affine contraction preserves the vectors defined by
\cref{eq:intermediate-affine-recursion} at every surviving node.
Consequently, contraction produces $p_\rho$, and reverse expansion produces
$p_i$ for every original node.
\end{proposition}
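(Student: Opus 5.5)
The proof is an induction over contraction steps. It mirrors \cref{prop:partial-elimination}, with the conditional-quadratic identities replaced by direct substitution in the linear recursion \cref{eq:intermediate-affine-recursion}.

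The induction hypothesis is that for the current intermediate tree $\T'$, every surviving node $i$ satisfies $\pi_i^{\T'}=p_i$. The base case $\T'=\T_0$ is \cref{lem:affine-recursion}, as noted after \cref{eq:intermediate-affine-recursion}.

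For the inductive step I treat a single primitive first.

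\emph{Rake.} Consider a rake of leaf $j$ with parent edge $e=(i,j)$. Since $j$ has no outgoing edges in $\T'$, the recursion gives $\pi_j^{\T'}=h_j$. The term $\zeta_e(\pi_j^{\T'})=Zh_j+z$ in the sum defining $\pi_i^{\T'}$ is therefore a constant, namely the message \cref{eq:affine-rake}. Removing the edge and adding this message to $h_i$ leaves $\pi_i$ unchanged. Every other node's recursion either does not involve $j$ or involves it only through $\pi_i$, which is unchanged.

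\emph{Compress.} Consider a compression of unary $j$ between $e_\ell=(i,j)$ and $e_r=(j,k)$. Here $\pi_j^{\T'}=h_j+Z_r\pi_k^{\T'}+z_r$, and substituting into $\zeta_{e_\ell}(\pi_j^{\T'})$ gives exactly $\widehat\zeta(\pi_k^{\T'})$ from \cref{eq:affine-compose}. Since the subtree below $k$ is untouched, $\pi_k$ is unchanged, and hence so is $\pi_i$ and every ancestor.

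\emph{Simultaneous operations.} A structural round, or any set of operations scheduled together, consists of rakes and compressions at pairwise distinct, nonadjacent eliminated nodes. Their modifications to \cref{eq:intermediate-affine-recursion} touch disjoint terms, and sibling rake messages simply add at the common parent, with the reduction order irrelevant by commutativity of vector addition. So the round is equivalent to performing the operations sequentially in any order, and the single-operation argument applies step by step. The one point needing care is that a compression's retained endpoint may itself be a compression endpoint elsewhere, or a parent receiving rakes. Because centers are nonadjacent, each such change alters only an $h$ or a $\zeta$ that the other operation reads as fixed input. I expect this bookkeeping to be the main (mild) obstacle, handled exactly as in \cref{prop:partial-elimination}.

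\emph{Root and reverse expansion.} At termination only $\rho$ remains, with no edges, so $p_\rho=\pi_\rho=h_\rho$. Reverse expansion then proceeds by reverse induction in the recorded order. For a raked $j$, the saved $h_j$ equals $\pi_j$ at removal, hence $p_j$ by the forward invariant. For a compressed $j$, the inverse relation $p_j=Z_rp_k+h_j+z_r$ with the saved $(Z_r,z_r,h_j)$ is precisely the recursion at $j$ in the tree just before removal. By the invariant, that recursion evaluates to $p_j$ once $p_k$, which is recovered earlier in reverse order, is inserted. This gives $p_i$ for every original node.
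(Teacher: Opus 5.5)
Your proposal is correct and follows essentially the same route as the paper's proof. The rake message $Zh_j+z$ is exactly the term removed from the parent's sum. The compressed map in \cref{eq:affine-compose} comes from substituting $\pi_j^{\T'}=h_j+Z_r\pi_k^{\T'}+z_r$ into $\zeta_{e_\ell}$. The inverse relations then recover every $p_i$ by reverse induction. You spell out in more detail than the paper why simultaneous operations commute and why the saved tape data reproduce $p_j$ during expansion, where the paper just says ``sibling contributions add'' and ``by reverse induction.''
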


\begin{proof}
For a rake, \cref{eq:affine-rake} is exactly the term removed from the
parent's sum in \cref{eq:intermediate-affine-recursion}; sibling contributions
add.  For a compression, substituting
$\pi_j^{\T'}=h_j+Z_r\pi_k^{\T'}+z_r$ into
$Z_\ell\pi_j^{\T'}+z_\ell$ gives \cref{eq:affine-compose}.  Thus every
operation preserves the recursively defined vector at each surviving node,
and the root data at termination equal $p_\rho$.  The inverse relations above
then recover every $p_i$ by reverse induction.
\end{proof}

Once all $p_i$ have been recovered, the quantities in
\cref{eq:local-affine-solve} are evaluated, and the root state is
\begin{equation}
x_\rho=-F_\rho^{-1}f_\rho.
\label{eq:xroot}
\end{equation}
For every original edge, define the parent-to-child state map
\begin{equation}
\psi_e(x)=T_ex+b_e.
\label{eq:state-map}
\end{equation}
Two consecutive maps compose according to
\begin{equation}
(T_r,b_r)\circ(T_\ell,b_\ell)
=(T_rT_\ell,T_rb_\ell+b_r).
\label{eq:transition-compose}
\end{equation}
Composition preserves the parent-to-descendant map represented between the
surviving endpoints.  Thus contraction produces the exact composed map for
each contracted chain segment, while the saved constituent map reconstructs
the eliminated node during reverse expansion.
An ordinary root-to-leaf traversal would have span equal to the tree height.
Instead, the executor contracts the state maps according to the same topology
plan, composing adjacent maps by \cref{eq:transition-compose} and saving the
map needed to restore each removed node.  Starting from $x_\rho$, reverse
expansion then evaluates all maps in each reverse dependency level
concurrently.  Hence state recovery depends on the depth of the contraction
plan's primitive dependency graph rather than the height of the original tree;
\cref{sec:complexity} proves that this depth is $O(\log N)$.  At a rake,
several children may read the same parent state concurrently, while each child
state has a unique writer.

The controls and multipliers then follow independently from
\begin{align}
x_j&=T_ex_i+b_e,&u_e&=K_ex_i+k_e,\qquad e=(i,j),\notag\\
y_i&=P_ix_i+p_i.
\label{eq:recover}
\end{align}

\begin{theorem}[Solve correctness]
\label{thm:solve-correctness}
Under \cref{def:assumptions}, for every right-hand side $(q,r,c)$, the
rake--compress solve described in this section returns the unique solution of
the KKT system in \cref{eq:kkt}.
\end{theorem}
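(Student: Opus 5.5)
The argument verifies directly that the returned quantities satisfy every equation in \cref{eq:kkt}. It then concludes by uniqueness from \cref{prop:unique}, which makes the KKT matrix nonsingular.

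\emph{Step 1: the returned coefficients are the intended ones.} By \cref{thm:quadratic-correctness}, the factorization returns the Hessians $P_i$ of the subtree values. Contraction and expansion are well posed by \cref{prop:contraction-well-posed}, and the local factors in \cref{eq:factor-local} are well defined. By \cref{prop:affine-evaluation} combined with \cref{lem:affine-recursion}, the vectors $p_i$ computed by affine contraction and reverse expansion are exactly the coefficients satisfying \cref{eq:affine-message}. For the state maps, composition \cref{eq:transition-compose} is associative. The same induction used for \cref{prop:affine-evaluation}, run on the maps $\psi_e$, therefore shows that reverse expansion from $x_\rho$ returns, at every node $j$ with parent edge $e=(i,j)$, the value $x_j=T_ex_i+b_e$ computed from its parent's state. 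Hence the output coincides with the sequential top-down recursion \cref{eq:recover}.

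\emph{Step 2: verify the KKT equations using \cref{eq:recover}.}
\begin{itemize}
\item \emph{Root equation \cref{eq:kkt-root}.} From $y_\rho=P_\rho x_\rho+p_\rho$ and \cref{eq:xroot}, we have $F_\rho x_\rho=c_\rho-\Delta_\rho p_\rho$. This is equivalent to $c_\rho-x_\rho-\Delta_\rho y_\rho=0$.
\item \emph{Dynamics \cref{eq:kkt-dyn}.} With $x_j=F_j^{-1}(A_ex_i+B_eu_e-f_j)$ and $u_e=K_ex_i+k_e$, we get $x_j=T_ex_i+b_e$ by \cref{eq:factor-local,eq:local-affine-solve}. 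Multiplying by $F_j$ and substituting $y_j=P_jx_j+p_j$ recovers $A_ex_i+B_eu_e+c_j-x_j=\Delta_jy_j$. This reverses the algebra in the proof of \cref{lem:affine-recursion}.
\item \emph{Control stationarity \cref{eq:kkt-u}.} The same proof shows that $y_j=W_j(A_ex_i+B_eu_e)+g_j$ follows from the dynamics and $y_j=P_jx_j+p_j$. Substituting this gives $G_eu_e+H_ex_i+r_e+B_e^\trans g_j=0$, which holds by the definitions of $K_e$ and $k_e$.
\item \emph{Node stationarity \cref{eq:kkt-x}.} Substitute $y_i=P_ix_i+p_i$ and the edge expressions. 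The coefficient of $x_i$ vanishes by the Riccati identity $P_i=Q_i+\sum(M_eK_e+A_e^\trans W_jA_e^{\rm cl})$ derived in the lemma's proof. The constant part vanishes by \cref{eq:affine-message} together with $d_e=Z_ep_j+z_e$.
\end{itemize}

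The main obstacle is that \cref{lem:affine-recursion} was phrased as a statement about subtree values, derived from stationarity, rather than as a sufficiency statement. I would therefore extract from its proof the purely algebraic identities, each valid for the defined matrices regardless of optimality: $g_j=F_j^{-\trans}p_j+W_jc_j$, $M_e+A_e^\trans W_jB_e=-K_e^\trans G_e$, and the Riccati identity for $P_i$. The Riccati identity needs care, since the lemma obtained it from $y_i=P_ix_i$ in the homogeneous problem. I would justify it instead via \cref{thm:quadratic-correctness}: $V_i^0$ is the homogeneous value, and its Hessian equals $Q_i$ plus the minimized edge contributions through the child values $\frac12x_j^\trans P_jx_j$ with regularized dynamics. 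Evaluating that minimization in closed form gives exactly $M_eK_e+A_e^\trans W_jA_e^{\rm cl}$. With these identities in hand, each equation above is a direct substitution, and uniqueness completes the proof.
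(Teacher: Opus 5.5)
Your proposal is correct, and its overall structure is the paper's: verify each equation of \cref{eq:kkt} for the quantities in \cref{eq:recover}, then conclude with \cref{prop:unique}. Your treatment of \cref{eq:kkt-root}, \cref{eq:kkt-dyn}, and \cref{eq:kkt-u} matches the paper's.

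The one real difference is node stationarity \cref{eq:kkt-x}. The paper identifies $\tfrac12x_i^\trans P_ix_i+p_i^\trans x_i+\kappa_i$ as the nonhomogeneous subtree value $V_i$. It then invokes the envelope identity $\nabla_{x_i}V_i=Q_ix_i+q_i+\sum_e(M_eu_e+A_e^\trans y_j)$ for the partial saddle elimination, so setting $y_i=\nabla_{x_i}V_i$ yields \cref{eq:kkt-x} immediately. You instead run the algebra of \cref{lem:affine-recursion} backward. The $x_i$-coefficient cancels by the Riccati identity $P_i=Q_i+\sum_e(M_eK_e+A_e^\trans W_jA_e^{\rm cl})$, and the constant cancels by \cref{eq:affine-message} with $d_e=Z_ep_j+z_e$.

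The paper's route is shorter, but it relies on the envelope identity and on the value-function interpretation of both $P_i$ and $p_i$. Yours is a pure sufficiency check. The computed $p_i$ need only satisfy the recursion guaranteed by \cref{prop:affine-evaluation}, and the value-function meaning enters only through the Riccati identity.

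Your derivation of that identity has two parts:
\begin{enumerate}
\item the one-step decomposition $V_i^0(x_i)=\tfrac12x_i^\trans Q_ix_i+\sum_e\inf_{x_j}\{\Phi_e(x_i,x_j)+\tfrac12x_j^\trans P_jx_j\}$;
\item the closed-form equality $\tau\bigl((\bar A_e,\bar C_e,\bar P_e),P_j\bigr)=M_eK_e+A_e^\trans W_jA_e^{\rm cl}$, checkable via $P_j(I+\bar C_eP_j)^{-1}=W_j-W_jB_eG_e^{-1}B_e^\trans W_j$ and then completing the square in $u_e$.
\end{enumerate}
This makes explicit a step that the lemma's proof obtains somewhat informally from ``$y_i=P_ix_i$'' in the homogeneous problem.
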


\begin{proof}
By \cref{lem:affine-recursion,prop:affine-evaluation}, the recovered $p_i$ are
the linear coefficients of the reduced subtree values.  Exact composition in
\cref{eq:transition-compose} ensures that the reconstructed states satisfy the
state relation $x_j=T_ex_i+b_e$ in \cref{eq:recover}.  The same equation sets
$u_e=K_ex_i+k_e$ and $y_i=P_ix_i+p_i$.  We verify that these quantities
satisfy \cref{eq:kkt}.

\Cref{thm:quadratic-correctness,lem:affine-recursion} identify
$V_i(x_i)=\tfrac12x_i^\trans P_ix_i+p_i^\trans x_i+\kappa_i$ as the
reduced subtree value.  Because the eliminated variables satisfy their
stationarity equations, the envelope identity for this partial saddle
elimination gives
\[
\nabla_{x_i}V_i(x_i)=Q_ix_i+q_i+
\sum_{e=(i,j)\in\out(i)}(M_eu_e+A_e^\trans y_j).
\]
Since \cref{eq:recover} sets
$y_i=P_ix_i+p_i=\nabla_{x_i}V_i(x_i)$, this identity is
\cref{eq:kkt-x}.  For $e=(i,j)$, the definitions of $T_e$ and $b_e$ give
\[
F_jx_j=A_ex_i+B_eu_e-f_j.
\]
Together with \cref{eq:FW,eq:local-affine-solve} and
$y_j=P_jx_j+p_j$, this proves \cref{eq:kkt-dyn}.  The same equations yield
$y_j=W_j(A_ex_i+B_eu_e)+g_j$, so the left side of
\cref{eq:kkt-u} is
\[
(G_eK_e+H_e)x_i+G_ek_e+r_e+B_e^\trans g_j=0
\]
by \cref{eq:factor-local,eq:local-affine-solve}.  Finally,
$F_\rho x_\rho=-f_\rho$ is equivalent to \cref{eq:kkt-root}.  Thus all KKT
equations hold, and uniqueness follows from \cref{prop:unique}.
\end{proof}

\subsection{Solve dataflow}

The executor first evaluates the affine recursion.  Node $i$ is initialized
with $h_i=q_i$, and edge $e$ with $\zeta_e=(Z_e,z_e)$.  Its callbacks are:
\operation{Rake.}
Return the rake contribution in \cref{eq:affine-rake}, save $s_j^p=h_j$, and
add sibling contributions to the parent data by a balanced reduction.

\operation{Compress.}
Replace the two incident edge maps by \cref{eq:affine-compose} and save
$s_j^p=(Z_r,h_j+z_r)$.

\operation{Expansion.}
For a compression, use $s_j^p$ and the recovered $p_k$ to set
$p_j=Z_rp_k+h_j+z_r$.  For a rake, set $p_j=s_j^p$.
The resulting affine tape is specific to the current right-hand side and may
be discarded once every $p_i$ has been recovered.

The executor then applies the same contraction plan to the state maps in
\cref{eq:state-map}.  This pass is a broadcast rather than an upward
recursion, so a rake contributes the additive identity to the parent data.
Its callbacks are:
\operation{Rake.}
For $e=(i,j)$, return a zero rake contribution and save
$s_j^x=(T_e,b_e)$.

\operation{Compress.}
For $e_\ell=(i,j)$ and $e_r=(j,k)$, replace their maps by the composition in
\cref{eq:transition-compose} and save the parent-to-middle map
$s_j^x=(T_\ell,b_\ell)$.

\operation{Expansion.}
Once $x_i$ has been recovered, restore a compressed middle node from
$s_j^x$ as $x_j=T_\ell x_i+b_\ell$, or a raked leaf as
$x_j=T_ex_i+b_e$.
After \cref{eq:recover} has been evaluated, the state tape may be discarded.

\section{Complete algorithm}
\label{sec:algorithm}

\Cref{alg:solver} combines the three algebraic passes.  The topology plan is
constructed once and reused.  A factorization is recomputed only when the KKT
matrix changes; any number of right-hand-side solves may then reuse it.

\begin{center}
\refstepcounter{algorithm}
\label{alg:solver}
\fbox{\begin{minipage}{0.95\columnwidth}
\small
\textbf{Algorithm \thealgorithm: parallel branched dual-regularized LQR.}
\begin{enumerate}
\item Build a topology-only contraction plan from the parent array (once).
\item Eliminate every edge control locally from the homogeneous quadratic data
to form the edge triples in \cref{eq:edge-components}; rake and compress these
triples, then expand the quadratic algebra to obtain all $P_i$.
\item Construct reusable solvers for $F_i$ and $G_e$ and form
$W_i,K_e,A_e^{\rm cl}=A_e+B_eK_e,T_e$ locally in parallel.
\item Form the original-edge affine maps $(Z_e,z_e)$; rake and compress these
maps, then expand the affine algebra to obtain all $p_i$.
\item Form $f_i,g_i,k_e,b_e$ and solve the root equation locally.
\item Contract the transition maps and expand from $x_\rho$ to obtain all $x_i$.
Compute $u_e$ and $y_i$ independently using the formulas in
\cref{eq:recover}.
\end{enumerate}
\end{minipage}}
\smallskip
\noindent\footnotesize The topology plan is reusable across all calls with the
same tree topology.
Steps 2--3 factor the KKT matrix; Steps 4--6 solve one right-hand side.
\end{center}

\begin{theorem}[End-to-end correctness]
\label{thm:end-to-end}
Under \cref{def:assumptions}, Algorithm~\ref{alg:solver} returns the unique
solution of the KKT equations in \cref{eq:kkt}.
\end{theorem}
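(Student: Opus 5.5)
The plan is to prove the statement by composing results already established, one per step of Algorithm~\ref{alg:solver}. No new algebra should be needed. The argument has to check three things: that each step computes exactly the object the earlier results refer to, that every operation is well defined, and that the final output meets the hypotheses of \cref{thm:solve-correctness}.

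\textbf{Steps 1--3.} Step~1 depends only on the parent array, so it introduces no numerical data. Its only relevance is that its rake and compress sequence is a valid contraction of the tree to its root; by \cref{sec:rake-compress}, each structural round removes only nonroot leaves and pairwise nonadjacent unary centers. For Step~2, the initialization is \cref{eq:initial-node-data,eq:edge-components}, and \cref{prop:contraction-well-posed} shows that every rake and compress along the plan is well defined. \Cref{thm:quadratic-correctness} then gives reverse-expansion outputs $P_i\succeq0$, and these are the Hessians of the homogeneous subtree values. One point needs care: simultaneous sibling rakes are summed by a balanced reduction. Matrix addition is associative and commutative, and \cref{prop:partial-elimination} already covers simultaneous separable eliminations, so the reduction changes nothing. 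For Step~3, the argument after \cref{eq:factor-local} shows that $F_i$ is nonsingular and $G_e\succ0$, so the local factors are well defined.

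\textbf{Steps 4--6.} Step~4 forms $(Z_e,z_e)$ by \cref{eq:affine-edge}. \Cref{lem:affine-recursion} shows that the true linear coefficients $p_i$ satisfy \cref{eq:affine-message}, which is \cref{eq:intermediate-affine-recursion} with $h_i=q_i$. This recursion determines $p_i$ uniquely, by induction from the leaves. \Cref{prop:affine-evaluation} then shows that contraction and expansion along the same plan return these $p_i$; again, sibling additions commute. Steps~5 and~6 evaluate \cref{eq:local-affine-solve}, \cref{eq:xroot}, and the state-map contraction. Contracting transition maps with \cref{eq:transition-compose} is associative, because it is ordinary affine composition. By induction over reverse expansion, each recovered $x_j$ therefore equals $T_ex_i+b_e$ for its original parent edge $e=(i,j)$, for raked and compressed nodes alike. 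Concretely, for a compressed node the saved map $(T_\ell,b_\ell)$ is the composition of the original maps from the parent endpoint $i$ down to $j$. Once $x_i$ is correct, so is $x_j$.

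\textbf{Conclusion and main obstacle.} With these facts in hand, the outputs are exactly the quantities described in \cref{sec:affine}, and \cref{thm:solve-correctness} together with \cref{prop:unique} gives the unique KKT solution. I expect the main obstacle to be the bookkeeping in the state-map pass. A compressed edge's endpoint $i$ in the intermediate tree may not be the original parent of $j$, so the saved left map must be identified with the composition of original maps along the eliminated chain. I would state this as an invariant: every current edge map equals the composition of the original maps along the original path it represents. This is proved by induction on contraction steps using \cref{eq:transition-compose}, and reverse induction then gives every $x_j$.
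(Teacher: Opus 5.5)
Your proposal is correct and takes the same route as the paper, whose proof simply cites \cref{thm:quadratic-correctness} for Steps 2--3 and \cref{thm:solve-correctness} for Steps 4--6. Your additional bookkeeping makes explicit what the paper leaves inside those theorems and the surrounding text of \cref{sec:affine}: well-posedness via \cref{prop:contraction-well-posed}, associativity of the sibling reductions, and the invariant that each contracted state map is the composition of original maps along the path it represents.
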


\begin{proof}
\Cref{thm:quadratic-correctness} proves that Steps 2--3 produce the exact
Hessians of the homogeneous subtree value functions and well-posed local
factors.
\Cref{thm:solve-correctness} proves that Steps 4--6 use those factors to solve
the KKT system for an arbitrary right-hand side.
\end{proof}

\section{Parallel complexity}
\label{sec:complexity}

\subsection{Deterministic structural schedule}

The contraction schedule depends only on the parent array.  At the start of a
structural round, rake every nonroot leaf.  After those removals, select
a maximal set of pairwise nonadjacent unary middle nodes and compress them in
parallel.  The implementation constructs this set greedily in fixed node-index
order, selecting an eligible center unless its parent or child has already
been selected; the analysis below applies to any maximal selection.  The
eliminated centers and the pairs of adjacent edges they replace are disjoint,
although surviving endpoint nodes may be shared by several compressions.
Rake contributions from siblings in the same round are combined by a precomputed
order-preserving reduction.  Its parenthesization accounts for the levels at
which its inputs become available, so no two operations write the same parent
and an already late input is not placed at the bottom of an otherwise balanced
reduction.  These rounds define the primitive operation set; they need not
become global execution barriers.  If executed
synchronously, expansion visits rounds backward and undoes compressions before
rakes because the forward order was rake then compress.

\Cref{fig:example-schedules} introduced representative schedules at the
topology level.  We now quantify their progress and refine their structural
rounds into a barrier-free numerical dependency schedule.

\begin{lemma}[Geometric progress]
\label{lem:progress}
Every nontrivial round of the stated scheduler removes at least one third of the
nodes present at its beginning, rounded up.  Consequently it has at most
$\lceil\log_{3/2}|\V|\rceil$ rounds.  The one-third removal bound is tight.
\end{lemma}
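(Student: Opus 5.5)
The plan is a counting argument on a single round; the bound on the number of rounds then follows by iteration. Fix a nontrivial round, so the tree $\T$ at its start has $n\ge 2$ nodes, and let $L\ge1$ be its number of nonroot leaves. Every nonroot leaf is raked, so the first phase removes exactly $L$ nodes and leaves a tree $\T'$ with $n-L$ nodes. I will partition the nodes of $\T'$ into three classes:
\begin{itemize}
\item the unary nonroot nodes, which are the compression candidates; call their number $u'$;
\item the nonroot leaves of $\T'$; call their number $\ell'$;
\item the remaining nodes, namely the root and the nodes with at least two children in $\T'$.
\end{itemize}

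First I bound the non-candidate nodes. Every nonroot leaf of $\T'$ had only leaf children in $\T$, hence at least one. These leaf children are distinct for distinct parents, so $\ell'\le L$. In any rooted tree the number of nodes with at least two children is at most (number of leaves)$\,-1$. Applying this to $\T'$, the nonroot nodes with two or more children, together with the root, number at most $\ell'$. There is a degenerate case $\ell'=0$, in which $\T'$ is a root followed by a unary path. There only the root is a non-candidate, and $1\le L$. In all cases
\begin{equation*}
n-L-u'\le 2L,\qquad\text{i.e.}\qquad 3L+u'\ge n .
\end{equation*}

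Next I bound the compressions from below. The unary nonroot nodes of $\T'$ induce a disjoint union of vertical paths, and adjacency matters only among them. A maximal set of pairwise nonadjacent vertices on a path of $k$ vertices has at least $\lceil k/3\rceil$ elements, because every unselected vertex must have a selected neighbor, so each selected vertex dominates at most three. Summing over the paths, the compression phase removes at least $u'/3$ nodes. The round therefore removes at least $L+u'/3\ge n/3$ nodes, using $3L+u'\ge n$ and $L\ge0$. Since the number of removed nodes is an integer, it is at least $\lceil n/3\rceil$. This holds for every maximal selection, and hence in particular for the greedy fixed-order one.

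For the round count, write $n_t$ for the node count before round $t$. The bound gives $n_{t+1}\le\lfloor 2n_t/3\rfloor$, so $n_t\le(2/3)^{t}|\V|$. The process reaches the single root once $(2/3)^t|\V|<2$, which yields at most $\lceil\log_{3/2}|\V|\rceil$ rounds; I will check the small cases directly to settle the ceiling. For tightness I use the three-node path $\rho\to a\to b$. Raking removes $b$, after which $a$ is a leaf of $\T'$ rather than a unary middle node, so no compression occurs. Exactly $1=\lceil 3/3\rceil$ of the $3$ nodes is removed, and the bound is attained. The main obstacle is the count of non-candidate nodes in $\T'$: the root and the degenerate no-leaf case must be handled carefully so that the constant $2L$, and hence the factor $1/3$, comes out exactly.
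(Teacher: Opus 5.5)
Your proof is correct and follows the paper's own argument: the bound $\ell'\le L$ on new leaves, the degree identity bounding branching nodes by $\ell'-1$, and the domination bound $u'\le 3|S|$ for a maximal nonadjacent selection on the paths of eligible unary centers, combined into $3L+u'\ge n$ and hence $L+|S|\ge n/3$. Two small remarks: your degenerate case $\ell'=0$ is really just $\T'=\{\rho\}$ (a nonempty unary path would end in a leaf), which is the paper's separate case $L=N-1$; and your tightness witness $\rho\to a\to b$ is the $k=1$ member of the paper's family (a unary root over a full binary tree with $k$ leaves, each given one extra child, $3k$ nodes), which the paper uses to show the one-third ratio is attained for arbitrarily large trees and not only at $N=3$.
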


\begin{proof}
Write $N$ for the number of nodes at the beginning of the round, and let $L$
be the number of leaves raked.  If raking leaves only the root, then
$L=N-1\ge N/2$.
Otherwise, in the post-rake tree,
let $L',U',B'$ count leaves, unary nodes, and nodes with at least two children.
Every new leaf had at least one distinct child among the raked leaves, hence
$L'\le L$.  The rooted-tree degree identity gives $B'\le L'-1$ when more than
one node remains.

Every nonroot unary node is an eligible compression center, and at most one
unary node---the root---is ineligible.  Let $\epsilon\in\{0,1\}$ indicate
whether the root is unary.  The eligible centers induce disjoint paths.  By
maximality, every eligible center is selected or is adjacent to a selected
center, while one selected center accounts for at most itself and its two
neighbors.  Thus, for the selected set $S$,
\begin{equation}
U'-\epsilon\le3|S|.
\label{eq:selected-bound}
\end{equation}
Also
\[
N=L+L'+U'+B'\le3L+U'-1.
\]
Therefore
\[
3(L+|S|)\ge3L+U'-\epsilon\ge N,
\]
so the round removes at least $\lceil N/3\rceil$ nodes.  Iterating the resulting
$2/3$ upper bound on the number of remaining nodes proves the round bound.

For tightness, take a unary root above a full binary tree with $k$ leaves and
attach one additional child to each of those leaves.  The resulting tree has
$1+(k-1)+k+k=3k$ nodes.  The round rakes the $k$ added children; afterward the
only unary node is the ineligible root, so there are no compressions.  Exactly
one third of the nodes are removed.
\end{proof}

\subsection{Static dependency levels}

The structural rounds determine which nodes are eliminated, but do not by
themselves specify the dependencies among the corresponding numerical
operations.  We make these dependencies explicit by decomposing each round
into four types of primitive operation.  Evaluating a rake produces one
contribution to the data of the leaf's parent.  Contributions associated with
one parent's children belong to a set equipped with a closed associative
binary operation $\mathbin{\oplus}$.  A binary combination applies
$\mathbin{\oplus}$ to the aggregates of two consecutive sibling intervals,
producing the aggregate for their union.  Once all contributions to one parent
have been combined, an application-defined absorption updates that parent's
node data using the resulting aggregate.  A compression produces the data of
the edge that replaces its two incident edges.  The scheduler requires each
rake, combination, absorption, and compression to have bounded arity; its span
bound counts the application-specific cost of these primitive operations.
Because the reduction preserves sibling order, associativity is sufficient:
commutativity is not required.

In the LQR instantiations of \cref{sec:quadratic-algebra,sec:affine},
$\mathbin{\oplus}$ is addition.  For example, in the factorization pass a rake
of $j$ produces $m_{i\leftarrow j}$ from \cref{eq:quadratic-rake}, and
absorption performs $U_i\leftarrow
U_i+\sum_jm_{i\leftarrow j}$.  The affine solve pass uses the same pattern with
its affine rake contributions.  In state recovery the rake contribution is
the additive identity because no sibling data must be accumulated.

These operations and their data dependencies form a directed acyclic graph.
Every initial node or edge datum has \emph{producer level} zero.  Recursively,
an operation whose input data have producer levels
$\ell_1,\ldots,\ell_s$ is assigned \emph{dependency level}
\begin{equation}
1+\max_{1\leq j\leq s}\ell_j,
\label{eq:earliest-level}
\end{equation}
and each output datum receives the level of the operation that produces it;
we denote the producer level of datum $z$ by $\ell(z)$.
Thus dependency level $t$ contains exactly the operations that can start after
$t-1$ successive layers of dependencies have completed; equivalently, $t$ is
the number of operations on a longest dependency chain ending at that
operation.  Operations in one level may read common data, but every output
datum has a unique producer.  The
intermediate aggregates of sibling contributions remain available until their
final aggregate
has been absorbed.  This earliest-start layering does not impose barriers at
the boundaries of structural rounds: an operation from a later round may begin
as soon as its own inputs are available.

Consider the $k$ rake contributions $c_1,\ldots,c_k$ to one parent, in sibling
order, and let $\ell_j$ be the producer level of $c_j$.  Their reduction must
choose a binary parenthesization.  Equivalently, it must choose a binary tree
whose ordered leaves are the contributions and whose internal vertices apply
$\mathbin{\oplus}$.  If leaf $j$ has depth $d_j$, its contribution passes
through $d_j$ combinations, so its path to the final aggregate is ready at
level $\ell_j+d_j$.  A reduction tree obtained by recursively balancing the
numbers of contributions in its two subtrees controls $d_j$ but ignores the
producer levels $\ell_j$; in particular, it may place an already late
contribution below several avoidable combinations.  We instead choose the
reduction tree to control $\max_j(\ell_j+d_j)$ while preserving sibling order.
This is an alphabetic minimax-tree objective~\cite{kirkpatrick1985alphabetic};
nonuniform input availability plays an analogous role in the design of
parallel-prefix circuits~\cite{held2017prefix}.

The construction assigns the readiness weight $w_j=2^{\ell_j}$ and defines
\begin{equation}
\begin{aligned}
W&=\sum_{j=1}^kw_j, &
p_j&=\frac{w_j}{W},\\
F_j&=\sum_{q<j}p_q, &
d_j^0&=\left\lceil\log_2\frac{1}{p_j}\right\rceil+1.
\end{aligned}
\label{eq:readiness-code-data}
\end{equation}
For contribution $j$, let $m_j=F_j+p_j/2$ and take the first $d_j^0$ binary
digits of this midpoint.  These finite words are the alphabetic Shannon--Fano--Elias
code~\cite[Sec.~5.9]{cover2006elements}.  Their \emph{binary trie} is the
rooted binary tree in which words with a common prefix share the corresponding
initial path.  Use its codewords as the leaves, contract every vertex with only
one child, and associate each remaining internal vertex with the combination
of its two child aggregates.  The next lemma shows that the codewords are
prefix-free and ordered, so this procedure indeed produces an order-preserving
binary reduction tree.  It also shows why the exponential weights are useful:
the shorter path assigned to a contribution with larger $\ell_j$ offsets its
later availability.

\begin{lemma}[Readiness-weighted reduction]
\label{lem:weighted-reduction}
For any associative operation $\mathbin{\oplus}$, the reduction above evaluates
$c_1\mathbin{\oplus}\cdots\mathbin{\oplus}c_k$ using a parenthesization that
preserves the original sibling order, uses $k-1$ combinations, and produces
its final aggregate by level
\begin{equation}
\ell_{\rm out}\le
\left\lceil\log_2 W\right\rceil+1.
\label{eq:weighted-reduction-level}
\end{equation}
\end{lemma}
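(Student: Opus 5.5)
The proof follows the standard analysis of the alphabetic Shannon--Fano--Elias code, with readiness levels in the role of probabilities.

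\emph{Prefix-free and ordered.} The intervals $[F_j,F_j+p_j)$ partition $[0,1)$ in sibling order. Truncate the midpoint $m_j$ to $d_j^0$ binary digits and call the resulting word $\bar m_j$. Since $2^{-d_j^0}\le p_j/2$, we have $m_j-2^{-d_j^0}<\bar m_j\le m_j$. Every real number whose binary expansion begins with this word lies in $[\bar m_j,\bar m_j+2^{-d_j^0})$. That dyadic interval is contained in $(F_j,F_j+p_j)$, because $\bar m_j>m_j-p_j/2=F_j$ and $\bar m_j+2^{-d_j^0}\le m_j+p_j/2=F_j+p_j$. The dyadic intervals of distinct words are therefore disjoint, so no word is a prefix of another. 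They also appear in the order of $j$, so the lexicographic order of the words matches sibling order.

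\emph{Reduction tree.} In the binary trie of these words, the codewords are exactly the leaves, by prefix-freeness. Contracting every vertex with one child leaves a full binary tree with $k$ ordered leaves, hence $k-1$ internal vertices, i.e.\ $k-1$ combinations. Its in-order leaf sequence is the lexicographic order, i.e.\ $c_1,\dots,c_k$. By associativity, evaluating at the internal vertices yields $c_1\oplus\cdots\oplus c_k$ without any commutativity.

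\emph{Level bound.} Contraction never lengthens a path, so the contracted depth satisfies $d_j\le d_j^0$. I will show by induction up the reduction tree that the aggregate at a vertex at depth $\delta$ is ready by level $\lceil\log_2W\rceil+1-\delta$. At leaf $j$, the datum $c_j$ has producer level $\ell_j=\log_2 w_j$. Using $\log_2(1/p_j)=\log_2W-\ell_j$ and the fact that $\ell_j$ is an integer,
\[
d_j^0=\lceil\log_2W\rceil-\ell_j+1 .
\]
Hence $\ell_j+d_j\le\lceil\log_2 W\rceil+1$, which is the leaf case. At an internal vertex, the combination's level is one plus the maximum of its children's producer levels, and the children lie one depth deeper, so the bound propagates. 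At the root ($\delta=0$) this gives $\ell_{\rm out}\le\lceil\log_2W\rceil+1$.

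If absorption is counted separately, it adds a constant to this bound. The case $k=1$ needs a remark: the single codeword gets length $d_1^0=1$, but contraction makes the tree a lone leaf, so no combination is needed and $\ell_{\rm out}=\ell_1\le\lceil\log_2 W\rceil+1$.

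The main obstacle is the strict containment of each word's dyadic interval inside its own open interval $(F_j,F_j+p_j)$; both prefix-freeness and order preservation rest on it. I will check the boundary cases carefully, in particular that truncation gives $\bar m_j\le m_j$ and that the "$+1$" in $d_j^0$ absorbs the case where $p_j$ is a power of two.
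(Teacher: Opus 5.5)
Your proposal is correct and follows the paper's proof essentially step for step. Containment of each codeword's dyadic interval in $[F_j,F_j+p_j)$ gives prefix-freeness and sibling order. Suppressing unary vertices of the trie gives a full ordered binary tree with $k-1$ combinations and $d_j\le d_j^0$. Integrality of $\ell_j$ then gives $\ell_j+d_j^0=\lceil\log_2W\rceil+1$.

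The differences are only presentational. You verify the containment through explicit truncation inequalities, where the paper uses distance to the midpoint. You propagate the level bound by induction on depth, where the paper uses the closed form $\ell_{\rm out}=\max_j(\ell_j+d_j)$.
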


\begin{proof}
Let $r_j$ be the integer whose length-$d_j^0$ binary representation is the word
for contribution $j$.  The word identifies the dyadic interval
$I_j=[r_j2^{-d_j^0},(r_j+1)2^{-d_j^0})$.  Because the first $d_j^0$ bits of
the binary expansion of $m_j$
(using the expansion with trailing zeros when $m_j$ is dyadic) represent
$r_j$, the remaining bits give a number $\theta_j\in[0,1)$ such that
$m_j=(r_j+\theta_j)2^{-d_j^0}$.  Hence $m_j\in I_j$.  The interval has width
$2^{-d_j^0}\le p_j/2$.  Since $m_j\in I_j$, every point of $I_j$ lies within
$p_j/2$ of $m_j$.  As $m_j$ is the midpoint of
$[F_j,F_j+p_j]$, it follows that $I_j\subseteq[F_j,F_j+p_j)$.  The intervals
$[F_j,F_j+p_j)$ are disjoint and occur in
sibling order, and hence so do the intervals $I_j$.  If one codeword were a
prefix of another, the dyadic interval of the former would contain that of the
latter, contradicting their disjointness.  The codewords are therefore
prefix-free.  Moreover, disjoint dyadic intervals occur from left to right in
the same order as their binary words occur lexicographically.  Thus the
lexicographic codeword order is the original sibling order.

Place the $0$-child to the left of the $1$-child at every trie vertex.  The
leaves then appear as $c_1,\ldots,c_k$ from left to right, and every subtree
contains a consecutive interval of contributions.  Suppressing unary
vertices preserves this order and cannot increase any leaf depth.  The
resulting tree is a full binary tree with $k$ leaves, so it has exactly $k-1$
internal vertices and therefore specifies exactly $k-1$ combinations.  It
changes only the parenthesization of
$c_1\mathbin{\oplus}\cdots\mathbin{\oplus}c_k$; associativity therefore gives
the stated aggregate.

Let $d_j$ be the depth of leaf $j$ in this reduction tree.  Each internal
combination adds one dependency level, so the level of the final aggregate is
\[
\ell_{\rm out}=\max_j(\ell_j+d_j).
\]
Unary suppression gives
$d_j\le d_j^0=\lceil\log_2(W/w_j)\rceil+1$.  Since
$\ell_j=\log_2w_j$ is an integer,
\[
\ell_j+d_j
\le \ell_j+\left\lceil\log_2W-\ell_j\right\rceil+1
=\lceil\log_2W\rceil+1.
\]
Taking the maximum over $j$ proves \cref{eq:weighted-reduction-level}.
\end{proof}

\begin{lemma}[Arbitrary-degree primitive depth]
\label{lem:async-depth}
For every $N$-node rooted tree, the dependency layering defined by
\cref{eq:earliest-level} has at most
\begin{equation}
D_N=\left\lfloor
\log_2(2N-1)
+\left\lceil\log_{3/2}N\right\rceil\log_2 36
\right\rfloor
\label{eq:dependency-level-bound}
\end{equation}
nonempty levels, without a bound on out-degree.  Its reverse recovery graph
has at most $D_N$ levels as well.  In particular, both depths are $O(\log N)$.
\end{lemma}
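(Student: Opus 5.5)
We bound producer levels with a potential argument over structural rounds. Reading the target bound $\log_2(2N-1)+R\log_2 36$ with $R=\lceil\log_{3/2}N\rceil$ from \cref{lem:progress} as $\log_2\bigl((2N-1)\cdot 36^{R}\bigr)$ suggests the invariant. Attach to every datum $z$ alive at the start of round $t$ (node data $U_i$ and edge data $\alpha_e$) the weight $2^{\ell(z)}$. We then show that the total weight $\Omega_t$ over the surviving $N_t$ nodes and $N_t-1$ edges satisfies
\[
\Omega_t\le 36^{\,t-1}(2N-1).
\]
Initially all levels are zero and there are $2N-1$ data, so $\Omega_1=2N-1$. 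At termination only the root datum survives, and its level is at most $\log_2\Omega_{R+1}$. The same bound covers every datum produced along the way, because a datum's weight is counted in $\Omega$ at the next round boundary unless it is consumed earlier. Consumed intermediates are controlled by the bounds below. Flooring gives $D_N$ because levels are integers.

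The per-round step proceeds in three parts. First, a rake of leaf $j$ with parent edge $e$ produces a contribution of level $1+\max(\ell(U_j),\ell(\alpha_e))$, so its weight is at most $2(2^{\ell(U_j)}+2^{\ell(\alpha_e)})$. Second, at a parent $i$ we apply \cref{lem:weighted-reduction} to its $k$ sibling contributions, with readiness weights summing to $W_i$. The final aggregate has level at most $\lceil\log_2W_i\rceil+1$, so its weight is at most $4W_i$. Absorption adds one further level, giving new node weight at most $2(2^{\ell(U_i)}+4W_i)\le 16(\text{old }i\text{ weight}+\text{raked weights})$, hence at most a factor $16$ overall. Third, a compression at $j$ reads $U_j$, $\alpha_{e_\ell}$ and $\alpha_{e_r}$ (some of these possibly updated by this round's rakes). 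Its output weight is therefore at most $2$ times the sum of its three input weights. Since selected centers are nonadjacent, each edge is consumed by at most one compression. Chaining the rake factor $16$ with the compression factor $2$ overcounts slightly, but we expect the constant $36=6^2$ to come from organizing this more carefully. A crude route bounds each new datum's weight by $2\cdot 3\cdot 6\cdot(\text{weights it aggregates})$, noting that each old datum feeds at most a bounded number of new data with disjoint charging. We then verify that the total multiplies by at most $36$. The main obstacle is this bookkeeping: each old datum must be charged to a unique surviving datum, and the multiplicative constants must compose to at most $36$ per round, even though an absorbed parent may in the same round serve as an endpoint read by compressions. We expect to handle this by noting that compressions only read endpoints through edges, not node data, except at their center, so absorbed parents are never double-counted.

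The maximum level in the whole graph is then at most $\log_2\max_t\Omega_t\le D_N$. Because levels are defined by longest chains, this means at most $D_N$ nonempty levels. For the reverse recovery graph, every expansion operation reads only tape data saved at removal, which are initial data for the reverse pass, together with the recovered value at one retained endpoint. The reverse dependency DAG is therefore the transpose of the forward elimination dependency structure restricted to node recoveries, and its longest chain is no longer than the forward one. Sibling reductions disappear entirely, since rakes restore independently and children read the parent concurrently. This gives at most $D_N$ reverse levels, and $D_N=O(\log N)$ is immediate.
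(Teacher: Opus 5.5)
You follow the paper's proof: the potential $\Omega_r=\sum_z 2^{\ell(z)}$ over surviving node and edge data with $\Omega_1=2N-1$, and the local inequality $2^{\ell(y)}\le 2\sum_j 2^{\ell(z_j)}$. You also use the weighted-reduction lemma to get aggregate weight at most $4\Gamma_i$, disjointness of compression inputs (nonadjacent centers, endpoint data never read), the round count from the progress lemma, and reversal of forward chains for expansion.

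\textbf{Gap.} You leave the per-round growth bound, which is the heart of the lemma, as something you only ``expect'', and you misjudge it. No extra care is needed beyond one principle, the paper's replacement bound. If pairwise input-disjoint primitives consume data of total weight $S$ from a collection of weight $T$, their outputs weigh at most $2S$, so the collection afterwards weighs at most $T-S+2S\le 2T$. With this principle your own numbers finish the proof:
\begin{itemize}
\item After rakes, reductions, and absorptions, each absorbing parent weighs at most $2\omega(U_i)+8\Gamma_i\le 16\bigl(\omega(U_i)+\text{weight of the leaves and edges raked into } i\bigr)$.
\item These charged sets are disjoint, so the total is at most $16\Omega_r$.
\item The compressions then at most double it, giving $\Omega_{r+1}\le 32\Omega_r\le 36\Omega_r$.
\end{itemize}
So chaining $16$ and $2$ does not overcount; it already beats $36$. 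The ``$2\cdot 3\cdot 6$'' route should be dropped. The paper uses the coarser sequence $2\Omega_r$ (rake outputs), $9\Omega_r$ (after reductions), $18\Omega_r$ (after absorptions), $36\Omega_r$ (after compressions).

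\textbf{Two smaller points.}
\begin{itemize}
\item Your final step bounds levels by $\log_2\max_t\Omega_t$. But rake contributions and intermediate sibling aggregates are consumed within a round and never enter any $\Omega_t$, so you must bound them directly, as the paper does. Contributions weigh at most $2\Omega_r$. Each intermediate aggregate lies on a dependency path to its final aggregate, so it weighs no more than that final aggregate. All such data therefore satisfy $2^{\ell}\le 36\Omega_r\le(2N-1)36^{R}$.
\item In the reverse argument, a recovery dependency (for instance $P_j$ on $P_k$, or $x_j$ on $x_i$ in the state pass) is not a single transposed edge. It corresponds to a forward dependency \emph{path} from the compression at $j$ to the later removal of that endpoint. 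That is all the length comparison needs, but ``transpose'' overstates it.
\end{itemize}
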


\begin{proof}
Set $\omega(z)=2^{\ell(z)}$.  Since $\ell(z)=\log_2\omega(z)$, it suffices to
bound these readiness weights.  Let $\Omega_r$ be their sum over the node and
edge data present at the beginning of structural round $r$.
Initially, the $N$ node-data bundles and $N-1$ edge-data bundles all have
producer level zero, so
\[
\Omega_1=(N+(N-1))2^0=2N-1.
\]
Suppose a primitive takes data $z_1,\ldots,z_s$ and produces $y$.  By
\cref{eq:earliest-level},
\begin{equation}
\omega(y)=2^{1+\max_j\ell(z_j)}
\le2\sum_{j=1}^s2^{\ell(z_j)}
=2\sum_{j=1}^s\omega(z_j).
\label{eq:local-readiness-weight}
\end{equation}
Consequently, if pairwise input-disjoint primitives consume data of total
weight $S$, their outputs have total weight at most $2S$.  Replacing those
inputs within a collection of weight $T$ therefore leaves weight at most
$T-S+2S\le2T$.  We apply this replacement bound to the four operation groups
of round $r$.

\emph{Rakes.}
The rakes consume disjoint leaf-node and edge data.  Let $S\le\Omega_r$ be
their total input weight.  Equation~\eqref{eq:local-readiness-weight} bounds
the total weight of their contributions by $2S\le2\Omega_r$; the node and edge
data not consumed by the rakes have weight $\Omega_r-S\le\Omega_r$.

\emph{Sibling reductions.}
For each parent $i$ receiving at least one contribution, let $\mathcal C_i$ be
its rake contributions, let
$\Gamma_i=\sum_{c\in\mathcal C_i}\omega(c)$, and let $a_i$ be their final
aggregate.  In \cref{lem:weighted-reduction}, $W=\Gamma_i$, and hence
\begin{equation}
\omega(a_i)=2^{\ell(a_i)}
\le 2^{\lceil\log_2\Gamma_i\rceil+1}
\le4\Gamma_i.
\label{eq:aggregate-readiness-weight}
\end{equation}
The ceiling and the additional level in \cref{lem:weighted-reduction} account
for the two factors of two in this bound.
Every contribution belongs to exactly one parent, so
$\sum_i\Gamma_i\le2\Omega_r$.  The final aggregates therefore have total
weight at most $8\Omega_r$.  Together with the unconsumed node and edge data,
their total weight before absorption is at most $9\Omega_r$.  Every
intermediate aggregate lies on a dependency path to its parent's final
aggregate; its producer level and weight are therefore no greater than those
of that final aggregate, and in particular its weight is at most
$8\Omega_r$.

\emph{Absorptions.}
Each absorption replaces one parent-node datum and its final aggregate.
Different absorptions have disjoint inputs, so the replacement bound gives a
total weight of at most $18\Omega_r$ after all absorptions.

\emph{Compressions.}
The selected compressions have disjoint middle-node and edge inputs.  They may
share surviving endpoints, but the data of those endpoints are not inputs.
One more application of the replacement bound gives
\begin{equation}
\Omega_{r+1}\le36\Omega_r.
\label{eq:round-readiness-growth}
\end{equation}
Rake, reduction, absorption, and compression outputs have weight at most
$2\Omega_r$, $8\Omega_r$, $18\Omega_r$, and $36\Omega_r$, respectively.

By \cref{lem:progress}, the number of structural rounds satisfies
$R\le\lceil\log_{3/2}N\rceil$.  Equations~\eqref{eq:round-readiness-growth}
and the initial value give
$\Omega_{r}\le(2N-1)36^{r-1}$.  The within-round bound above therefore gives,
for every datum $z$ produced in any round,
\[
2^{\ell(z)}\le(2N-1)36^R.
\]
Taking base-two logarithms and using the bound on $R$ yields
$\ell(z)\le D_N$.  Equation~\eqref{eq:earliest-level} assigns exactly these
producer levels.  Moreover, an operation at level $t$ lies at the end of a
dependency chain containing operations at levels $1,\ldots,t$, so the number
of nonempty levels is at most $D_N$.

Reverse recovery traverses the recorded elimination dependencies backward.
Each expansion callback corresponds to one recorded rake or compression and
depends only on boundary data restored by callbacks later in forward
dependency order.  Thus every chain of expansion callbacks maps to the reverse
of a chain in the forward elimination graph.  Sibling-combination operations
have no reverse callback, so the reverse callback graph has at most $D_N$
levels.  Each callback has constant numerical depth; hence reverse recovery
also has $O(\log N)$ span.
\end{proof}

\begin{theorem}[Work, storage, and span]
\label{thm:complexity}
Let $N=|\V|$, let $\chi(n,m)$ bound the work of one dense local LQR
operation, and let $\delta(n,m)$ bound its span.  Given a contraction plan, the
numerical stages 2--6 of Algorithm~\ref{alg:solver} use
\begin{equation}
O(N\chi(n,m))\ \text{work},\qquad
O(N(n^2+nm+m^2))\ \text{storage}.
\end{equation}
Their span is
\begin{equation}
O\!\left(\delta(n,m)\log N\right).
\label{eq:span}
\end{equation}
In particular, for fixed block dimensions the work and span are $O(N)$ and
$O(\log N)$, respectively, independently of tree height, balance, and maximum
out-degree.
\end{theorem}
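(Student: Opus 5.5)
The plan is to count work and storage primitive by primitive, and to take span from \cref{lem:async-depth}. Fix a contraction plan. Over all structural rounds the forward pass has exactly one rake or compression per nonroot node, so $N-1$ eliminations. By \cref{lem:weighted-reduction}, each parent with $k$ raked children in a round needs $k-1$ binary combinations. Summed over parents and rounds this is at most $N-1$ combinations, plus one absorption per parent per round, hence at most $N-1$ absorptions in total. The primitive count is therefore $O(N)$. Reverse expansion has one callback per recorded rake or compression.

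In each of the three numerical passes (quadratic, affine, and state maps), every primitive is a dense operation on blocks of size $n\times n$, $n\times m$, or $m\times m$. These are the matrix products and the inverses of $I+\mathcal C U$ or $S$ in \cref{eq:terminal-fold,eq:composition}, matrix additions, affine compositions \cref{eq:affine-compose,eq:transition-compose}, and their reverse counterparts. Each therefore costs at most $\chi(n,m)$ work and $\delta(n,m)$ span. Stages 3 and 5 add purely local node and edge computations \cref{eq:FW,eq:factor-local,eq:local-affine-solve,eq:recover}, again $O(N)$ operations of cost $\chi$ that are mutually independent, so they contribute only $O(\delta)$ span. The root solve \cref{eq:xroot} is a single local operation. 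This gives total work $O(N\chi(n,m))$.

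For storage, note that the live data at any moment are one node datum per surviving node and one edge datum per surviving edge, each of size $O(n^2)$ (triples, affine maps). The tapes hold one saved record of size $O(n^2)$ per eliminated node. The final factorization stores $P_i,W_i$, factors of $F_i$, and per edge $K_e,A_e^{\rm cl},T_e$ and a factor of $G_e$, which is $O(n^2+nm+m^2)$ per node or edge. Intermediate sibling aggregates are bounded by the number of combinations. The total storage is thus $O(N(n^2+nm+m^2))$.

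For span, apply \cref{lem:async-depth} to each contraction and to its reverse expansion. Each pass has at most $D_N=O(\log N)$ dependency levels. Every level consists of primitives of span at most $\delta(n,m)$ that run concurrently under the CREW discipline, because each output has a unique writer. Each pass therefore has span $O(\delta\log N)$. The six stages are composed sequentially and there is a constant number of them, so \cref{eq:span} follows.

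The main obstacle is justifying that the layering in \cref{lem:async-depth}, which is defined abstractly via producer levels, is realized with a per-level cost of only $\delta$. Concretely, two things need checking: that each callback, including the absorption of an aggregate, has bounded arity, and that the concurrent reads of shared endpoint data in compressions and state broadcasts cause no serialization. These follow from the bounded-arity requirement stated before \cref{lem:weighted-reduction} and from the unique-writer property. Independence from height, balance, and degree follows because neither $D_N$ nor the counts above depend on those quantities.
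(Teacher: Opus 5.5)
Your proposal is correct and follows essentially the same argument as the paper. You count one rake or compression per nonroot node and $k-1$ combinations per sibling reduction, which gives $O(N)$ bounded-dimension primitives and tape records. You then invoke \cref{lem:async-depth} for each of the constant number of contraction and expansion passes to get $O(\delta(n,m)\log N)$ span.
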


\begin{proof}
Every nonroot node is removed exactly once by rake or compress.  A reduction of
$k$ sibling contributions uses $k-1$ binary combinations, so the number of
local algebra operations
over all rounds is $O(N)$.  Reverse expansion restores every removed node once.
The dimensions of all node, edge, and tape records depend on $n$ and $m$, but
not on $N$.  Dependency-level execution additionally retains one rake
contribution per rake until it has been combined and absorbed, so storage
remains linear.  By \cref{lem:async-depth},
each contraction and expansion pass has $O(\delta(n,m)\log N)$ span.  Algorithm
\ref{alg:solver} uses a constant number of these passes and constant-depth local
factorization stages, which preserves the span bound in \cref{eq:span}.
\end{proof}

\begin{corollary}[Native scenario-MPC form]
\label{cor:native-complexity}
Under the assumptions in \cref{eq:native-assumptions} and for fixed native
state and control dimensions, the node-control problem in
\cref{eq:node-lagrangian} on an arbitrary $N$-node scenario tree has an exact
$O(N)$-work, $O(N)$-storage, and $O(\log N)$-span numerical solve, given a
topology plan, by applying the lifting in \cref{prop:lifting} followed by
Algorithm~\ref{alg:solver}.
\end{corollary}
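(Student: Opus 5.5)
The plan is to compose \cref{prop:lifting} with \cref{thm:end-to-end} and \cref{thm:complexity}, and to check that the lifting preserves size, dimensions, and the standing assumptions. The proof has four steps.

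\emph{Step 1: apply the lifting and its assumption transfer.} Start from the native data satisfying \cref{eq:native-assumptions} and apply the lifting of \cref{prop:lifting}. That proposition gives three facts directly. The lifted data satisfy \cref{def:assumptions}. The lifted tree has $N'=|\V|+|\V\setminus\mathcal L|\le2N-1$ nodes. Its KKT solutions are in one-to-one correspondence with those of $\mathcal L_{\rm node}$, with the native $(x_i,v_i,y_i)$ obtained by projection.

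\emph{Step 2: check the dimensions and the construction cost.} The lifted state dimension is $n+m$. The control dimension is $m$ on decision edges, and a fixed dummy dimension (say $1$, or $m$) on uncertainty edges. All of these are fixed whenever $n$ and $m$ are fixed. Building the lifted data is a node-local and edge-local operation: each native node yields at most two lifted nodes and two edges, each with constant-size blocks. This costs $O(N)$ work and $O(1)$ span. The topology plan for the lifted tree depends only on the native topology, so it is reusable exactly as the native plan would be; this matches the phrase ``given a topology plan.''

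\emph{Step 3: invoke correctness and complexity.} \Cref{thm:end-to-end} shows that Algorithm~\ref{alg:solver} returns the unique lifted KKT solution. \Cref{thm:complexity} with $N'\le2N-1$ and fixed block dimensions gives $O(N')=O(N)$ work and storage and $O(\log N')=O(\log N)$ span. Neither bound depends on height, balance, or out-degree. This matters because \cref{prop:lifting} guarantees that the lifting does not increase branching, although \cref{thm:complexity} holds regardless. Projection back to the native variables, with the multiplier entering $i^-$ being $(y_i,0)$, is local and costs $O(N)$ work and $O(1)$ span.

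\emph{Step 4: confirm exactness.} By Step 1 the projected solution is a native KKT solution. Uniqueness would follow either from \cref{prop:unique} applied to the lifted problem together with the bijection, or directly from that bijection.

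The only delicate point is the dummy controls. I would confirm that giving them zero input matrix and a positive-definite cost keeps \cref{eq:local-convexity} valid at $i^+$. This holds because $M_e=0$ on dummy edges. Since this check is already contained in \cref{prop:lifting}, I expect no real obstacle: the corollary is a bookkeeping composition of earlier results.
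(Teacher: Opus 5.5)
Your proposal is correct and follows the paper's own argument. You lift to at most $2N-1$ nodes with fixed dimensions $(n+m,m)$, apply \cref{thm:end-to-end} and \cref{thm:complexity} to the lifted problem, and project back via \cref{prop:lifting}; your explicit accounting for the linear-work, constant-span cost of building the lifted data and projecting the solution is a detail the paper leaves implicit, not a different route.
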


\begin{proof}
The lifting has at most $2N-1$ nodes and replaces $(n,m)$ by the fixed dimensions
$(n+m,m)$.  Apply
Theorems~\ref{thm:end-to-end} and~\ref{thm:complexity}, then project using
\cref{prop:lifting}.
\end{proof}

The theorem assumes that a contraction plan is available.  The current host
planner scans all node slots in every round and therefore has
$O(N\log N)$ worst-case preprocessing work.  The plan is reused across MPC
iterations whenever the parent array is unchanged.  Constructing dependency
levels is linear in the number of planned operations.  The timing study
therefore treats planning and compilation as reusable setup and reports the
post-compilation factorization and right-hand-side solve separately.

\section{JAX implementation}
\label{sec:implementation}

The rake--compress package~\cite{rakecompressjax} accepts one parent
index per node.  Its \texttt{TreeContractionPlan} stores a static schedule of
rakes, sibling reductions, absorptions, and compressions.  Its dependency depth
satisfies the bound in \cref{lem:async-depth}.  The schedule contains separate
integer arrays
for rakes, sibling reductions, absorptions, and compressions; retaining each
rake contribution until its absorption lets these operations overlap work
elsewhere.  Numerical data
are arbitrary JAX PyTrees with a leading node or edge axis.  Local algebra
callbacks are vectorized with \texttt{jax.vmap}; indexed updates have disjoint
destinations and therefore require no conflict resolution at the algorithmic
level.  \texttt{tree\_contract} returns root data
and a residual tape, and \texttt{tree\_expand} applies algebra-specific reverse
callbacks.  A Python loop over the $O(\log N)$ scheduled operation groups is
unrolled during tracing, while work within each group remains batched.  The
same code composes with JIT compilation, vectorization, and automatic
differentiation in JAX~\cite{jax2018}.

The consumer package \path{regularized_lqr_jax}~\cite{regularizedlqrjax}
implements Algorithm~\ref{alg:solver}.  Node arrays retain caller order; edge
arrays follow the plan's child ordering.  The \texttt{factor\_tree} and
\texttt{solve\_tree} calls share a plan-selected executor, so one factorization
may serve several affine right-hand sides.

In turn, \texttt{primal-dual-lipa}~\cite{primalduallipa} consumes this solver
and exposes a primal--dual interior-point method for nonlinear tree-structured
optimal-control problems in the
edge-control normal form of \cref{sec:problem}, with sequential and parallel
tree Riccati solves.  Users provide node- and edge-local costs and constraints,
edge dynamics, and the parent array through a common interface; selecting
sequential or parallel execution changes only the tree-LQR kernel.  The package
uses automatic differentiation to form the Newton subproblems and handles the
barrier terms, line search, and recovery of the complete primal--dual solution.
The contraction plan and compiled solver are reused while the topology and
static problem structure remain unchanged.

The current JAX implementation uses uniform array shapes so that all operations
of one kind in a level can share compiled dense kernels.  This is an
implementation choice, not a restriction of the algorithm: other backends may
use dynamically sized or shape-polymorphic kernels, while static-shape backends
may use padding, batches of equal-shaped operations, or separate compiled
plans.  JAX/XLA may lower one logical level to multiple device kernels, so the
PRAM-style span is an algorithmic dependency bound, not a promise of one
hardware dispatch per level.  Accelerator speed also depends on batching
occupancy, memory traffic, fusion, compilation caching, and the distribution
of operation counts across irregular levels.

\section{Numerical validation}
\label{sec:validation}

\subsection{KKT accuracy}

We assembled the dense matrix for the KKT system in \cref{eq:kkt} independently
in NumPy and compared its solution with the $(x,u,y)$ returned by
Algorithm~\ref{alg:solver}.  The test set
contains 26 instances in IEEE~754 double precision (binary64): ten fixed chain,
star, and irregular cases with both $\Delta=0$ and $\Delta\succeq0$, and sixteen
randomly generated trees whose node labels were permuted to destroy topological
order.  State dimensions ranged from one to four and control dimensions from one
to three.  With JAX 0.10.1 on the CPU, the maximum absolute KKT residual was
$3.89\times10^{-15}$, and the maximum absolute difference from the independent
dense solution over $(x,u,y)$ was $1.25\times10^{-14}$.  Additional tests check
chain-executor equivalence, plan selection, factor reuse, finite
single-precision residuals, and differentiation through a right-hand side.

\subsection{Contraction depth}

\Cref{tab:depths} reports the primitive operation levels measured for four
topology families.  ``Balanced'' is a complete binary tree; ``comb'' alternates a
trunk with side leaves.  In every case the abstract number of rakes plus
compressions is $N-1$, as required for linear numerical work.  Chains and combs
demonstrate logarithmic depth when tree height is linear; stars demonstrate it
when degree is linear.

\begin{table}[H]
\centering
\caption{Primitive operation levels for the tested topologies.}
\label{tab:depths}
\small
\setlength{\tabcolsep}{4.2pt}
\begin{tabular}{lrrrrr}
\toprule
Topology & $16$ & $64$ & $256$ & $1024$ & $4096$\\
\midrule
Chain    &  5 &  7 &  9 & 11 & 13\\
Balanced & 10 & 16 & 22 & 28 & 34\\
Comb     & 11 & 17 & 23 & 29 & 35\\
Star     &  6 &  8 & 10 & 12 & 14\\
\bottomrule
\end{tabular}
\end{table}

\section{Scenario-tree MPC example}
\label{sec:case-studies}

\begin{figure*}[!t]
\centering
\includegraphics[width=\textwidth]{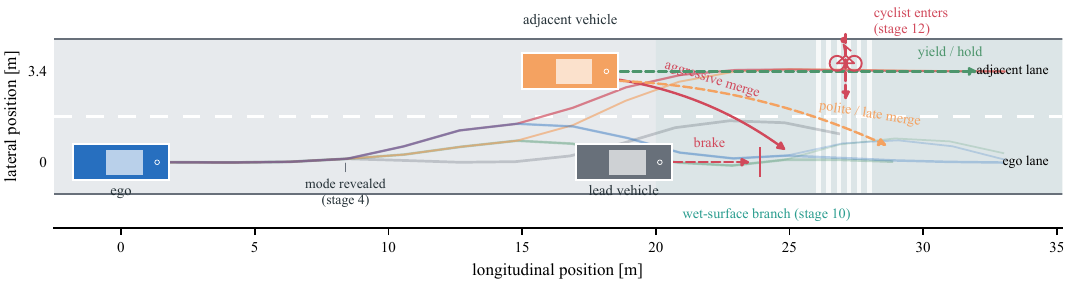}
\caption{Bird's-eye schematic and optimized ego-policy paths.  At stage~4, the
adjacent vehicle yields or merges, or the lead vehicle brakes.  Later branches
resolve the adjacent-vehicle reaction, road surface, and cyclist entry.
Vehicle glyphs and annotations are schematic; the road geometry and ego paths
are generated from the case-study data.}
\label{fig:driving-world}
\end{figure*}

We use a constructed autonomous-driving example to exercise irregular
topology, nonlinear dynamics, and inequality constraints, and to demonstrate
integration of the parallel kernel in a complete nonlinear solver.

The nonlinear program has the edge-local form
\begin{align}
\min_{x,u}\quad&
 \sum_{i\in\V}\pi_i\ell_i(x_i)
 +\sum_{e=(i,j)\in\E}\pi_j\ell_e(x_i,u_e),\label{eq:case-ocp}\\
\text{s.t.}\quad&
x_j=f_e(x_i,u_e),\notag\\[-0.2em]
&g_e(x_i,u_e)=0,\qquad h_e(x_i,u_e)\leq0.\notag
\end{align}
Node-local constraints are included where needed.  Here $\pi_i$ is the
probability of reaching node $i$, and conditional probabilities conserve mass
at every branch.  We solve the problem in \cref{eq:case-ocp} using the open-source
\texttt{primal-dual-lipa} implementation~\cite{primalduallipa}, with
\texttt{use\_parallel\_lqr=True}.  All runs use double precision and one
iterative-refinement step.

The state is longitudinal position, lateral position, speed, and heading,
$x=(s,\ell,v,\psi)$; acceleration and yaw rate form the two-dimensional control.
A nonlinear kinematic model contains $v\cos\psi$ and $v\sin\psi$.  The base
instance has a $4\,$s horizon with $0.25\,$s time steps, $N=102$ nodes, 13
leaves, height 16, and maximum out-degree three.  It constrains lateral
position to $[-1.2,4.6]\,$m, speed to $[2,13]\,$m/s, acceleration to
$[-3,2.2]\,$m/s$^2$, and yaw rate to $[-0.65,0.65]\,$rad/s.  Agent intent
is represented by mutually exclusive traffic hypotheses: at stage~4 the
adjacent vehicle yields or merges, or a lead vehicle brakes.  Selected branches
later reveal aggressive, polite, or late merge response, road friction, and
cyclist entry, as illustrated in \cref{fig:driving-world}.
The leaves terminate between stages 13 and 16.  Expected tracking, effort,
and smooth collision-avoidance penalties are minimized subject to road, speed,
acceleration, and yaw-rate bounds.  This stylized construction reflects the
delayed multimodal information pattern in branch MPC
~\cite{chen2022branch,bouzidi2025contingency}.

The primal--dual interior-point method converged in 53 iterations.  We
independently recomputed stationarity, feasibility, inequality-violation, and
perturbed-complementarity residuals from the returned primal and dual
variables.  The largest absolute component of these residuals was
$\epsilon_\infty=9.44\times10^{-8}$ at a final barrier parameter of
$7.50\times10^{-11}$.  As a structural cross-check, the generated plan contains
101 rakes and compressions, one for every nonroot node.

The policy geometry in \cref{fig:driving-world} visualizes
nonanticipativity: root-to-leaf curves coincide before an observation and
separate only after their information histories differ.

\begin{figure}[H]
\centering
\includegraphics[width=\columnwidth]{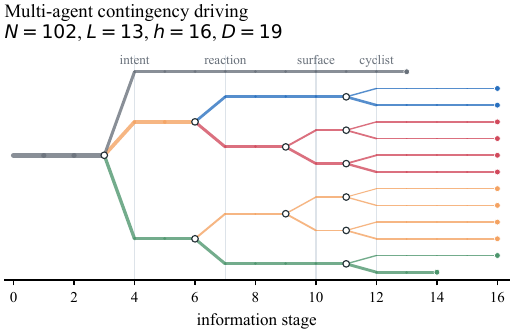}
\caption{Irregular information tree in the autonomous-driving example.  Leaves
are ordered depth first; line width is proportional to the square root of
scenario probability, white circles mark branching nodes, and colored dots
mark leaves.  $L$, $h$, and $D$ denote the numbers of leaves, tree height, and
primitive dependency levels.}
\label{fig:driving-topology}
\end{figure}

\section{Benchmarks}
\label{sec:benchmarks}

\subsection{Synthetic tree-LQR scaling}
\label{sec:synthetic-benchmarks}

\Cref{tab:timings} compares the sequential dual-regularized tree Riccati
recursion on the CPU (``CPU seq.'') with the parallel algorithm on the GPU
(``GPU par.'').  The CPU baseline follows the tree levels sequentially and
therefore has span proportional to the tree height, whereas the parallel
algorithm has logarithmic span.  We also execute the parallel algorithm on the
CPU; its times, which we omit from the table, enter the GPU/CPU ratios below to
isolate the effect of changing backends.

The measurements used a Kaggle-hosted virtualized 2.00-GHz Intel Xeon CPU
(two cores and four hardware threads) and a 16-GiB NVIDIA Tesla P100 PCIe GPU.
Each generated instance has state dimension $n=8$, control dimension $m=2$,
$Q_i=2I$, $R_e=I$, and $\Delta_i=0.02I$; the remaining data are fixed
pseudorandom arrays.  CPU and GPU runs use identical instances.  The
measurements exclude problem generation, topology planning, backend
initialization, data transfer, and JIT compilation.  Each timed call is
synchronized.  We separate factorization from an affine right-hand-side solve
because MPC algorithms commonly reuse one factorization for several right-hand
sides.  CPU measurements for the comb are omitted because XLA compilation of
the unrolled irregular plan was prohibitively slow; this is a compilation
limitation of the present JAX realization rather than an algorithmic
restriction.  These results therefore characterize repeated execution with a
reused plan and compiled executable, not one-shot latency; the excluded setup
and compilation costs can dominate a first call.

At $N=8192$, GPU factorization is $11.08\times$, $12.26\times$, and
$15.14\times$ faster than sequential CPU factorization for the chain,
balanced tree, and star, respectively.  The corresponding reused-factor solve
speedups are $19.06\times$, $20.16\times$, and $24.33\times$.  Moving the
same parallel workload from CPU to GPU gives factorization speedups of
$16.80\times$, $14.47\times$, and $13.95\times$, respectively.  The widening
gap with $N$ is consistent with the logarithmic dependency depth exposing
increasing accelerator parallelism.

\begin{table}[H]
\centering
\caption{Post-compilation FP64 times in milliseconds.  Entries are medians of
five synchronized executions; solve reuses the factorization.}
\label{tab:timings}
\scriptsize
\setlength{\tabcolsep}{2.8pt}
\begin{tabular}{llrrrr}
\toprule
& & \multicolumn{2}{c}{Factorization} & \multicolumn{2}{c}{Solve}\\
\cmidrule(lr){3-4}\cmidrule(lr){5-6}
Topology & $N$ & CPU seq. & GPU par. & CPU seq. & GPU par.\\
\midrule
Chain
 & 128  & 4.500 & 4.005 & 1.459 & 0.835\\
 & 512  & 17.943 & 6.073 & 5.098 & 1.205\\
 & 2048 & 70.565 & 10.951 & 20.036 & 2.035\\
 & 4096 & 145.712 & 16.475 & 37.421 & 3.086\\
 & 8192 & 280.218 & 25.288 & 87.143 & 4.572\\
Balanced
 & 128  & 2.968 & 1.727 & 0.926 & 0.656\\
 & 512  & 11.760 & 2.551 & 2.980 & 0.854\\
 & 2048 & 45.492 & 4.821 & 7.576 & 1.272\\
 & 4096 & 88.260 & 7.872 & 16.019 & 1.732\\
 & 8192 & 167.452 & 13.660 & 48.052 & 2.384\\
Comb
 & 128  & --- & 2.509 & --- & 0.830\\
 & 512  & --- & 4.123 & --- & 1.301\\
 & 2048 & --- & 7.219 & --- & 1.905\\
 & 4096 & --- & 10.787 & --- & 2.661\\
 & 8192 & --- & 17.520 & --- & 3.848\\
Star
 & 128  & 2.994 & 0.909 & 0.865 & 0.599\\
 & 512  & 11.033 & 1.464 & 2.497 & 0.630\\
 & 2048 & 48.600 & 3.397 & 7.944 & 0.919\\
 & 4096 & 86.903 & 6.339 & 15.516 & 1.324\\
 & 8192 & 177.473 & 11.724 & 48.072 & 1.976\\
\bottomrule
\end{tabular}
\end{table}

\subsection{Autonomous-driving refinement}
\label{sec:driving-benchmarks}

We refine the temporal discretization of the autonomous-driving problem while
holding its continuous-time definition fixed: the horizon remains $4\,$s,
branching and early-termination times are rounded to the nearest grid point,
and running costs are scaled with the step length.  Because the tree size
changes in discrete increments, the targets $128$, $512$, $2048$, and $8192$
give the closest available sizes in \cref{tab:driving-solve-timings}.  Every
instance has 13 leaves and dimensions $(n,m)=(4,2)$.

\begin{table}[H]
\centering
\caption{Post-compilation autonomous-driving solve times in milliseconds
(medians of five synchronized executions).  ``Iter.'' and ``LS'' denote outer
and aggregate logical line-search iterations; $h$ is tree height.}
\label{tab:driving-solve-timings}
\scriptsize
\setlength{\tabcolsep}{2.7pt}
\begin{tabular}{rrrrrrr}
\toprule
$N$ & $h$ & Iter. & LS & CPU par. & GPU par. & Speedup\\
\midrule
129  & 21   & 56  & 93   & 410.77   & 264.56  & $1.55\times$\\
511  & 90   & 36  & 55   & 963.80   & 273.39  & $3.53\times$\\
2044 & 366  & 181 & 1101 & 16277.34 & 1914.49 & $8.50\times$\\
8185 & 1470 & 35  & 35   & 12242.73 & 830.77  & $14.74\times$\\
\bottomrule
\end{tabular}
\end{table}

\begin{table}[H]
\centering
\caption{Autonomous-driving refinement: post-compilation Newton-system times
in milliseconds.  The solve reuses the factorization.}
\label{tab:driving-kkt-timings}
\scriptsize
\setlength{\tabcolsep}{3.2pt}
\begin{tabular}{rrrrrrr}
\toprule
& \multicolumn{3}{c}{Factorization} & \multicolumn{3}{c}{Solve}\\
\cmidrule(lr){2-4}\cmidrule(lr){5-7}
$N$ & CPU par. & GPU par. & Speedup & CPU par. & GPU par. & Speedup\\
\midrule
129  & 3.333 & 3.550 & $0.94\times$ & 1.015 & 1.506 & $0.67\times$\\
511  & 9.772 & 4.648 & $2.10\times$ & 2.521 & 1.821 & $1.38\times$\\
2044 & 37.784 & 6.712 & $5.63\times$ & 8.504 & 2.338 & $3.64\times$\\
8185 & 142.116 & 13.252 & $10.72\times$ & 29.632 & 3.792 & $7.81\times$\\
\bottomrule
\end{tabular}
\end{table}

CPU and GPU runs followed identical nonlinear paths: their outer and
line-search counts agree in every row, their objectives agree to the displayed
precision, and their independently recomputed residuals are at most
$1.94\times10^{-8}$.  The nonmonotone complete-solve time is therefore a
property of the discretized nonlinear problems, not a backend discrepancy.  In
particular, the $N=2044$ instance requires 181 outer and 1101 logical
line-search iterations, whereas the $N=8185$ instance accepts its first trial
in each of 35 outer iterations.  Thus \cref{tab:driving-kkt-timings} provides
the controlled scaling comparison for the LQR kernel, whereas
\cref{tab:driving-solve-timings} measures end-to-end integration and also
reflects discretization-dependent nonlinear convergence.

At $N=8185$, moving the same rake--compress workload from CPU to GPU accelerates
the Newton factorization by $10.72\times$ and the reused-factor solve by
$7.81\times$.  Moving the complete primal--dual interior-point algorithm to the GPU reduces
the end-to-end nonlinear-solve time by $14.74\times$.  At $N=129$,
GPU overhead instead makes the isolated factorization and solve slightly slower.
The crossover and widening speedups agree with the synthetic results: accelerator
parallelism becomes useful only after the tree is large enough to amortize
launch and scheduling overhead.

\section{Conclusion}

We developed an algebraically exact rake--compress Riccati solver for arbitrary
scenario trees.  Its conditional-value algebra is closed under leaf and
unary-node elimination, permits positive-semidefinite dual regularization, and
recovers the complete primal--dual solution.  A deterministic static scheduler
guarantees logarithmic span independently of tree height and maximum
out-degree; given its reusable topology plan, factorization and each solve
require linear numerical work and storage.  An exact linear-size lifting covers
the usual scenario-tree node-control convention.  The released JAX packages
provide the contraction--expansion algorithm, the dual-regularized LQR solver,
and a primal--dual interior-point method for tree-structured optimal-control
problems.  Benchmarks show increasing GPU speedups with tree size, and an
autonomous-driving example demonstrates integration within nonlinear
scenario-tree MPC.

\section*{Acknowledgment}

OpenAI Codex, using the GPT-5.6 Sol model, assisted with drafting and editing
this manuscript and implementing some of its methods.  The author reviewed and
verified all resulting artifacts.

\bibliographystyle{IEEEtran}
{\small\bibliography{references}}

\begin{IEEEbiography}[{\includegraphics[width=1in,height=1.25in,clip,keepaspectratio]{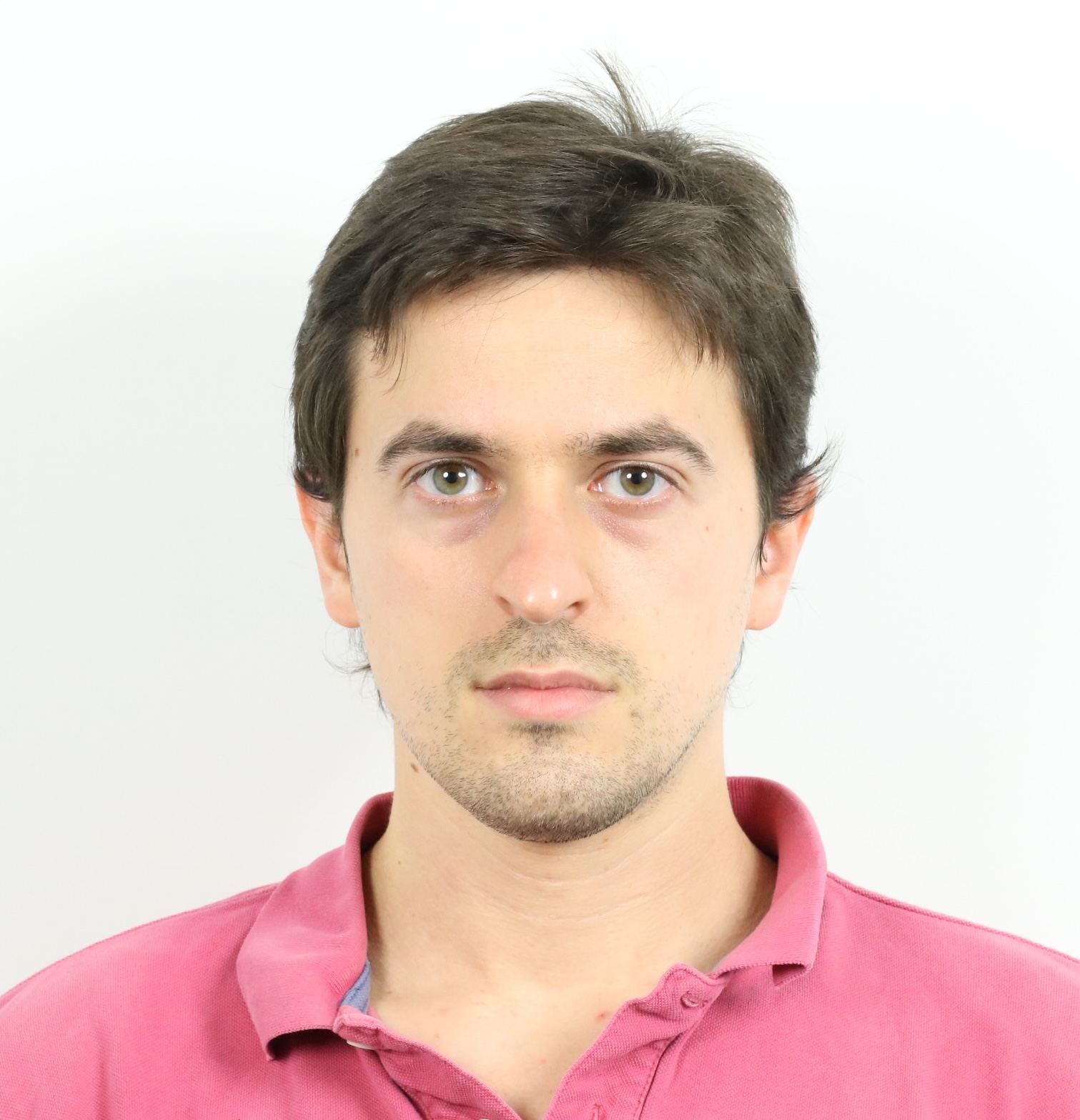}}]{João Sousa-Pinto}
João Sousa-Pinto has been working on motion planning, trajectory optimization,
and machine learning since 2017.  Over this period, he held positions at Tesla
AI, Apple SPG, drive.ai, and Baidu USA.  He was at the University of Oxford
between 2012 and 2017, having graduated from the MSc Computer Science (2013)
and DPhil Computer Science (2017) programmes.  Prior to that, João graduated
from the BSc Mathematics programme at the University of Porto, where he was a
recipient of the Calouste Gulbenkian New Talents in Mathematics scholarship.
\end{IEEEbiography}

\end{document}